\newtheorem{theorem}{Theorem}
\newtheorem{proposition}[theorem]{Proposition}
\newtheorem{corollary}[theorem]{Corollary}
\newtheorem{lemma}[theorem]{Lemma}
\theoremstyle{remark}
\newtheorem{example}[theorem]{Example}
\newtheorem{remark}[theorem]{Remark}
\newtheorem{notation}[theorem]{Notation}
\title{Factorization invariants in half-factorial affine semigroups}
\author{{P.A.} {Garc\'{\i}a S\'anchez}}
\address{Departamento de \'Algebra, Universidad de Granada, E-18071 Granada, Espa\~na}
\email{pedro@ugr.es}
\author{I. Ojeda}
\address{Departamento de Matem\'{a}ticas, Universidad de Extremadura,  E-06071 Badajoz, Espa\~na}
\email{ojedamc@unex.es}
\author{{A.} S\'anchez-R.-Navarro}
\address{Departamento Lenguajes y Sistemas Inform\'aticos, Universidad de C\'adiz, E-11405 Jerez
de la Frontera (C\'adiz),  Espa\~na }
\email{alfredo.sanchez@uca.es}
\thanks{The first author is supported by the projects MTM2010-15595 and FQM-343,  FQM-5849, and FEDER funds. The second author is supported by the project MTM2007-64704, National Plan I+D+I and by Junta de Extremadura (FEDER funds). The third author is partially supported by Junta de Andaluc\'{\i}a group FQM-366}
\subjclass[2010]{20M14 (Primary) 20M13, 13A05 (Secondary).}
\keywords{
Commutative monoid; affine semigroup; Betti element; catenary degree; toric ideal}
\begin{document}

\begin{abstract}
Let $\mathbb{N} \mathcal{A}$ be the monoid generated by $\mathcal{A} = \{\mathbf{a}_1, \ldots, \mathbf{a}_n\} \subseteq \mathbb{Z}^d.$ We introduce the homogeneous catenary degree of $\mathbb{N} \mathcal{A}$ as the smallest $N \in \mathbb N$ with the following property: for each $\mathbf{a} \in \mathbb{N} \mathcal{A}$ and any two factorizations $\mathbf{u}, \mathbf{v}$ of $\mathbf{a}$, there exists factorizations $\mathbf{u} = \mathbf{w}_1, \ldots, \mathbf{w}_t = \mathbf{v} $ of $\mathbf{a}$ such that, for every $k,\ \mathrm{d}( \mathbf{w}_k, \mathbf{w}_{k+1}) \leq N,$ where $\mathrm{d}$ is the usual distance between factorizations, and the length of $\mathbf{w}_k,\ \vert \mathbf{w}_k \vert,$ is less than or equal to $\max\{\vert \mathbf{u} \vert, \vert \mathbf{v} \vert \}.$ We prove that the homogeneous catenary degree of $\mathbb{N} \mathcal{A}$  improves the monotone catenary degree as upper bound for the ordinary catenary degree, and we show that it can be effectively computed. We also prove that for half-factorial monoids, the tame degree and the $\omega$-primality coincide, and that all possible catenary degrees of the elements of an affine semigroup of this kind occur as the catenary degree of one of its Betti elements.
\end{abstract}

\maketitle
\section*{Introduction}

Non-unique factorization invariants can be divided into two big groups: the first contains those based on the lengths of the factorizations of an element, whilst the second exploits the idea of distance between factorizations. In a half-factorial monoid, all factorizations of a given element have the same length, and thus the first group simply characterizes the half-factorial property without giving extra information. This is why we will focus on the second group containing the catenary and tame degree. The $\omega$-primality is not in any of these two groups, but as we prove in the last section, in the half-factorial setting, it coincides with the tame degree.

An element in a cancellative monoid might be expressed in different ways as a linear combination with nonnegative integer coefficients of its generators. Each such expression is usually known as a factorization of the element. The distance between two different factorizations is the largest length (number of generators) of the factorizations resulting after removing their common part. Even if two factorizations of a given element are too far away one from the other, it may happen that we can join them by a chain of factorizations with the property that the distance of two consecutive elements in the chain are bounded by a fixed amount. The least possible of these bounds is the catenary degree of the element, and the supremum of all catenary degrees of all the elements in the monoid is the catenary degree of the monoid itself. 

It was shown in \cite{Chapman} that the catenary degree can be computed by using certain (minimal) presentations of the monoid, and thus according to \cite{herzog} these computations can also be made by using binomial ideals. 

We prove that if an affine semigroup is half-factorial, then any catenary degree of any element in the monoid is the catenary degree of one of its Betti elements. As a consequence, its catenary degree can be computed as the maximal total degree of a minimal generating system of its associated binomial ideal (which is homogeneous). 

From an affine semigroup $S$ we construct $S^\mathsf{eq}$ and $S^\mathsf{hom}$. These two monoids are half-factorial, and the catenary degrees of $S^\mathsf{eq}$ and $S^\mathsf{hom}$ are upper bounds of the catenary degree of $S$. The first one corresponds to the well known equal catenary degree (the lengths of the factorizations in the chains are equal), while the second is a lower bound of the monotone catenary degree (the lengths are non-decreasing), which we call homogeneous catenary degree. Both equal and monote catenary degrees can be computed by using linear integer programming (see \cite{char-cmon-2}). The advantage of using binomial ideals is that the concept of catenary degree translates to that of total degree, and thus computation of equal and homogeneous catenary degrees can be done by looking at the largest total degree of minimal systems of generators of two binomial ideals. Hence, instead of integer programming one can use Gr\"obner basis computations and with the help of any computer algebra system, these two catenary degrees can be easily calculated. Indeed, all computations in the examples we give (and the experiments that led to our results) have been performed with the \texttt{numericalsgs GAP} package (\cite{numericalsgps}). Moreover, bounds for minimal generators of homogeneus toric ideals can be used to find upper bounds for the catenary degree of $S$.

The tame degree of an atomic monoid is the minimum $N$ such that for any factorization of an element in the monoid and any atom dividing this element, there exists another factorization at distance at most $N$ from the original factorization in which this atom occurs (in a cancellative monoid, when using additive notation, $a$ divides $b$ means that $b-a$ is in the monoid).  

The $\omega$-primality measures how far the irreducibles of a monoid are from being prime: it is the minimum $N$ such that whenever an irreducible element divides a sum of elements, then it divides a subsum with at most $N$ elements. We show in the last section that, tame degree and $\omega$-primality coincide for half-factorial affine semigroups.

\section{Presentations, binomial ideals, and factorizations}

Let $\mathbbmss{k}[X] :=  \mathbbmss{k}[X_1, \ldots, X_n]$ be the polynomial ring in $n$ variables over a field $\mathbbmss{k}.$ As usual, we will write $X^\mathbf{u}$ for the monomial $X_1^{u_1} \cdots X_n^{u_n} \in \mathbbmss{k}[X]$, and will define the \textbf{degree} of a monomial $X^\mathbf{u} \in \mathbbmss{k}[X]$ as $\deg(X^\mathbf{u}) = \sum_{i=1}^n u_i$. 

Let $\mathcal{A} = \{\mathbf{a}_1, \ldots, \mathbf{a}_n\} \subseteq \mathbb{Z}^d$, and let $A$ be the matrix whose rows are $\mathbf a_1,\ldots,\mathbf a_n$. The semigroup homomorphism 
$$
\begin{array}{rcl}
\pi : \mathbb{N}^n & \longrightarrow & \mathbb{N} \mathcal{A} := \mathbb{N} \mathbf{a}_1 + \cdots + \mathbb{N} \mathbf{a}_n\\ \mathbf{u} = (u_1, \ldots, u_n) & \longmapsto & \mathbf{u} A := \sum_{i=1}^n u_i \mathbf{a}_i
\end{array}
$$
defines a homomorphism of semigroup algebras $$\tilde \pi : \mathbbmss{k}[X] \longrightarrow \mathbbmss{k}[\mathcal{A}] := \bigoplus_{\mathbf{a} \in \mathbb{N} \mathcal{A}} \mathbbmss{k} \chi^\mathbf{a};\ X^\mathbf{u} \longmapsto \chi^{\mathbf{u} A}.$$ The kernel of $\tilde \pi$ is the toric ideal $$I_\mathcal{A} = \big\langle \{ X^\mathbf{u} - X^\mathbf{v}\ \mid\ \mathbf{u}, \mathbf{v} \in \mathbb{N}^n\ \text{with}\  \pi(\mathbf{u}) = \pi(\mathbf{v})\} \big\rangle$$ (see, e.g. \cite[Lema 4.1]{Sturmfels}). Thus from the kernel of $\tilde \pi$ we can construct a presentation for $\mathbb N\mathcal A$, that is, a system of generators for the congruence $\ker \pi=\{ (\mathbf u,\mathbf v)\in \mathbb N^n\times \mathbb N^n ~|~ \pi(\mathbf u)=\pi(\mathbf v)\}$ (see \cite{herzog}).

Given $\mathbf a\in \mathbb N\mathcal A$, the set $\mathsf Z(\mathbf a)=\pi^{-1}(\mathbf a)$ is the \textbf{set of factorizations} of $\mathbf a$. For a factorization $\mathbf u=(u_1,\ldots,u_n)$  of an element $\mathbf a \in \mathbb N\mathcal A$, its \textbf{length} is $|\mathbf u|=\sum_{i=1}^n u_i=\deg(X^\mathbf u)$. The \textbf{set of lengths} of $\mathbf a$ is $\mathsf L(\mathbf a)=\{ |\mathbf u| ~|~ \mathbf u\in \mathsf Z(\mathbf a)\}$.

\begin{remark}
We are going to assume that $\mathbb N\mathcal A$ is reduced, that is, $(-\mathbb N\mathcal A)\cap \mathbb N\mathcal A =\{0\}$, or equivalently, its only unit is the zero element. This restriction is motivated by two facts: the first is that units are not considered as parts of factorizations, and the second is that in the reduced case, in order to find a minimal system of generators for $I_\mathcal A$ we only have to look at certain non-connected graphs that we will define below.

In this setting, the sets $\mathsf Z(\mathbf a)$ are always finite (indeed all its elements are incomparable with respect to the usual partial ordering on $\mathbb N^n$; and the finiteness follows from Dickson's lemma or by Gordan's lemma, see for instance \cite{libro-fg}).

We are not assuming that $\mathcal A$ is the minimal system of generators of $\mathbb N\mathcal A$. It is well known (see for instance \cite[Exercise 6, Chapter 3]{libro-fg}) that $\mathbb N\mathcal A$ admits a unique minimal system of generators, and its elements are precisely those that cannot be expressed as sums of two other non-unit elements in $\mathbb N\mathcal A$. These elements are usually called \textbf{irreducibles} or \textbf{atoms} of the monoid.
\end{remark}

\subsection*{Half-factorial monoids}

The monoid $\mathbb N\mathcal A$ is \textbf{half-factorial} if for every  $\mathbf a\in \mathbb N\mathcal A$, $\sharp \mathsf L(\mathbf a)=1$, that is, the lengths of all factorizations of $\mathbf a$ are equal. This means that the ideal $I_\mathcal A$ is homogeneous. In view of \cite[Lemma 4.14]{Sturmfels}, there exists $\omega \in \mathbb{Q}^d$ such that $$A\, \omega^T=(1,\ldots,1)^T,$$ or  equivalently, $\mathbf a_i\cdot \omega=1$, for all $i\in\{1,\ldots, n\}$, where the dot product is defined as usual: $(x_1,\ldots,x_n)\cdot (y_1,\ldots,y_n)=x_1y_1+\cdots+ x_ny_n$. Indeed, the converse is also true, because if there exists such an $\omega$, then for any two factorizations $\mathbf u, \mathbf v$ of an element $\mathbf a\in\mathbb N\mathcal A$, $\mathbf  a=\pi(\mathbf u)=\pi(\mathbf v)$, and thus $\mathbf a=\mathbf u A=\mathbf v A$. Hence $\mathbf a \cdot \omega =\mathbf u A\, \omega^T = \mathbf v A\, \omega^T$, which leads  to $\mathbf a \cdot \omega=\mathbf u \cdot (1,\ldots,1) = \mathbf v \cdot (1,\ldots,1)$, that is, $\mathbf a \cdot \omega=|\mathbf u|=|\mathbf v|$.  In particular, we have shown the following, which will be used later.

\begin{lemma}\label{longitudes-half-factorial}
The monoid $\mathbb N\mathcal A$ is half-factorial if and only if there exists $\omega\in\mathbb Q^d$ such that $A\, \omega^T=(1,\ldots,1)^T$. If this is the case, $\mathsf L(\mathbf a)=\{\mathbf a \cdot \omega\},$ for every $\mathbf a\in\mathbb N\mathcal A$.
\end{lemma}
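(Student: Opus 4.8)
The plan is to prove the two implications separately and to read off the formula for $\mathsf L(\mathbf a)$ from the easy one.

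The ``if'' direction is essentially a one-line computation: if $\omega\in\mathbb Q^d$ satisfies $A\,\omega^T=(1,\ldots,1)^T$, equivalently $\mathbf a_i\cdot\omega=1$ for all $i$, then for any $\mathbf a\in\mathbb N\mathcal A$ and any $\mathbf u\in\mathsf Z(\mathbf a)$ we have $\mathbf a=\mathbf uA$, hence $\mathbf a\cdot\omega=(\mathbf uA)\,\omega^T=\mathbf u\,(A\,\omega^T)=\mathbf u\cdot(1,\ldots,1)=|\mathbf u|$. Since the left-hand side does not involve $\mathbf u$, every factorization of $\mathbf a$ has length $\mathbf a\cdot\omega$; in particular $\sharp\mathsf L(\mathbf a)=1$, which gives half-factoriality, and $\mathsf L(\mathbf a)=\{\mathbf a\cdot\omega\}$, which is the last assertion.

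For the ``only if'' direction I would work with the kernel lattice $L=\{\mathbf w\in\mathbb Z^n\ \mid\ \mathbf wA=0\}$. Given $\mathbf w\in L$, write it as $\mathbf w=\mathbf u-\mathbf v$ with $\mathbf u,\mathbf v\in\mathbb N^n$ its positive and negative parts; then $\mathbf uA=\mathbf vA$, so $\mathbf u$ and $\mathbf v$ are factorizations of a common element, and half-factoriality forces $|\mathbf u|=|\mathbf v|$, i.e.\ $\mathbf w\cdot(1,\ldots,1)=0$. Thus $(1,\ldots,1)$ is orthogonal to $L$, hence to the rational subspace $L\otimes\mathbb Q=\{\mathbf w\in\mathbb Q^n\ \mid\ \mathbf wA=0\}$ it spans, which is precisely the left kernel of $A$ over $\mathbb Q$. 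Since the orthogonal complement of the left kernel of $A$ is the $\mathbb Q$-column space of $A$, the vector $(1,\ldots,1)^T$ lies in that column space, i.e.\ there is $\omega\in\mathbb Q^d$ with $A\,\omega^T=(1,\ldots,1)^T$. (Alternatively one can quote \cite[Lemma 4.14]{Sturmfels} directly, noting that half-factoriality is equivalent to the homogeneity of $I_\mathcal A$ for the standard grading, since in that case the generating binomials $X^\mathbf u-X^\mathbf v$ with $\pi(\mathbf u)=\pi(\mathbf v)$ are homogeneous.)

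The whole argument is elementary; the only step needing a little care is the reduction in the second implication---checking that the hypothesis ``$|\mathbf u|=|\mathbf v|$ whenever $\pi(\mathbf u)=\pi(\mathbf v)$ with $\mathbf u,\mathbf v\in\mathbb N^n$'' really propagates to every integer, and then every rational, vector killed by $A$, which is exactly what makes the final orthogonality/dimension count legitimate.
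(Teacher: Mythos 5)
Your proof is correct. The ``if'' direction and the derivation of $\mathsf L(\mathbf a)=\{\mathbf a\cdot\omega\}$ are exactly the computation the paper performs (it shows $\mathbf a\cdot\omega=\mathbf uA\,\omega^T=\mathbf u\cdot(1,\ldots,1)=|\mathbf u|$ for every factorization $\mathbf u$). Where you diverge is the ``only if'' direction: the paper simply observes that half-factoriality means $I_\mathcal A$ is homogeneous for the standard grading and then cites \cite[Lemma 4.14]{Sturmfels} to produce $\omega$, whereas you give a self-contained linear-algebra argument --- half-factoriality kills $(1,\ldots,1)$ on the integer kernel lattice $L=\{\mathbf w\in\mathbb Z^n\mid \mathbf wA=0\}$ via positive/negative parts, $L$ spans the rational left kernel because $A$ has integer entries (clear denominators), and $(1,\ldots,1)^T$ therefore lies in the $\mathbb Q$-column space of $A$. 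This is essentially an unpacking of the proof of the cited Sturmfels lemma; it costs a few extra lines but makes the statement independent of the reference, and you correctly flag the one point that needs care (passing from integer to rational kernel vectors). Your parenthetical alternative of quoting Sturmfels directly is precisely the paper's route, so nothing is missing.
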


We define the $\mathcal{A}-$\textbf{degree} of a monomial $X^\mathbf{u} \in \mathbbmss{k}[X]$ as $\deg_\mathcal{A} (X^\mathbf{u}) = \sum_{i=1}^n u_i \mathbf{a}_i$ ($=\pi(\mathbf{u}))$. 

So, we have that  if $\mathbb N\mathcal A$ is half-factorial, with $\omega\in\mathbb Q$ such that $A\, \omega^T=(1,\ldots,1)^T$, then  \begin{equation}\label{cat_ecu2}\deg_\mathcal{A}(X^\mathbf{u})  = \deg_\mathcal{A}(X^\mathbf{v}) = \mathbf{a} \hbox{ implies } \deg(X^\mathbf{u})  = \deg(X^\mathbf{v}) =  \mathbf{a} \cdot \omega.\end{equation} 

\subsection*{Betti elements}

Let $M_\mathbf{a}=\{X^\mathbf{u} ~|~ \mathbf u\in \mathsf Z(\mathbf a)\}$ be the set of monomials of $\mathbbmss{k}[X]$ of $\mathcal{A}-$degree $\mathbf{a} \in \mathbb{N} \mathcal{A}$, and define the abstract simplicial complex on the vertex set $M_\mathbf{a}$ $$\nabla_\mathbf{a} = \{ F \subseteq M_\mathbf{a}\ \mid\ \mathrm{gcd}(F) \neq 1\},$$ where $\mathrm{gcd}(F)$ denotes the greatest common divisor of the monomials in $F.$ This simplicial complex was introduced by Eliahou in \cite{Eliahou}, and allows to describe the $\mathcal{A}-$graded minimal free resolution of $\mathbbmss{k}[\mathcal{A}]$ (see for instance \cite{OjVi1, Charalambous}).

We will say that $\mathbf{a} \in \mathbb{N} \mathcal{A}$ is a \textbf{Betti element} if $\nabla_\mathbf{a}$ has more than two connected components. The set of Betti elements of $\mathbb{N} \mathcal{A}$ is denoted $\mathrm{Betti}(\mathbb{N} \mathcal{A})$.

\begin{remark}
The definition of Betti element is equivalent to the one given in \cite{uniquely} (see, e.g. \cite[Proposition 9.7]{libro-fg} or \cite[Theorem 3]{OjVi1}); the main advantage of this definition is that the vertex labels are the factorizations of elements in the semigroup.
\end{remark}

It is well known, that $\mathbf{a}$ is a \textbf{Betti element} of $\mathbb{N} \mathcal{A}$ if and only if $I_\mathcal{A}$ has minimal binomial generator in $\mathcal{A}-$degree $\mathbf{a}$ (see, \cite[Corollary 4]{OjVi1}), or equivalently, there is a pair of factorizations $\mathbf u,\mathbf v$ of $\mathbf{a}$ such that $(\mathbf u,\mathbf v)$ belongs to a minimal presentation of $\mathbb N\mathcal A$ (\cite[Section 2]{uniquely}).

\subsection*{Catenary degree}

Now, consider the \textbf{distance} between two factorizations $\mathbf{u}$ and $\mathbf{v} \in \mathbb N^n$ that is defined as follows 
$$\mathrm{d}(\mathbf{u}, \mathbf{v}) = \max(\deg(X^\mathbf{u}),\deg(X^\mathbf{v})) - \deg(\mathrm{gcd}(X^\mathbf{u}, X^\mathbf{v})).$$ The curious reader may check that $\mathrm{d}$ is actually a metric in the topological sense (see \cite[Proposition 1.2.5]{GHKb} for its basic properties).

Let $N \geq 0,\ \mathbf{a} \in \mathbb{N} \mathcal{A}$ and $\mathbf{u}, \mathbf{v} \in \mathsf Z(\mathbf{a})$. An \textbf{$N-$chain} from $\mathbf{u}$ to $\mathbf{v}$ is a sequence ${\mathbf{u}_0}, \ldots, {\mathbf{u}_k} \in \mathsf Z(\mathbf a)$ such that 
\begin{itemize}
\item $\mathbf{u_0} = \mathbf{u}$ and $\mathbf{u}_k = \mathbf{v}$; 
\item $\mathrm{d}({\mathbf{u}_i}, {\mathbf{u}_{i+1}}) \leq N$, for all $i$.
\end{itemize}
The \textbf{catenary degree} of $\mathbf{a}$, $\mathsf c(\mathbf{a})$, is the minimum $N \in \mathbb{N}$ such that for any two factorizations $\mathbf{u}$ and $\mathbf{v}$ of $\mathbf{a}$ there is an $N-$chain from $\mathbf{u}$ to $\mathbf{v}$. This minimum is always reached, since the set $\mathsf Z(\mathbf a)$ has finitely many elements.

The \textbf{catenary degree} of $\mathbb{N} \mathcal{A}$ is defined by $$\mathsf c(\mathbb{N} \mathcal{A}) = \mathrm{max}\{\mathsf  c(\mathbf{a})\ \mid\ \mathbf{a} \in \mathbb{N} \mathcal{A}\}.$$

From the proof of \cite[Theorem 3.1]{Chapman} it follows that the catenary degree of $\mathbb N\mathcal A$ is reached in one of its Betti elements.

\section{Catenary degree versus Betti degree in a half factorial monoid}

Let $\mathcal{A} = \{\mathbf{a}_1, \ldots, \mathbf{a}_n\} \subseteq \mathbb{Z}^d$. In this section, we assume that $\mathbb N\mathcal A$ is half-factorial, and thus there exists $\omega\in \mathbb Q^d$ such that $A\, \omega^T=(1,\ldots,1)^T$, where $A$ is the matrix whose rows are the elements of $\mathcal A$. 

The proof of the following result is a straightforward consequence of Lemma \ref{longitudes-half-factorial}.

\begin{lemma}\label{distancias-half-factorial}
For $\mathbf u,\mathbf v\in \mathsf Z(\mathbf a)$,   
$$\mathrm{d}(\mathbf{u}, \mathbf{v}) =  \mathbf{a} \cdot \omega - \deg(\mathrm{gcd}(X^\mathbf{u}, X^\mathbf{v})).$$ 
In particular $\mathrm d(\mathbf{u}, \mathbf{v})  \le  \mathbf{a} \cdot \omega$, and the equality holds if and only if $\gcd(X^\mathbf{u}, X^\mathbf{v})=1$ (equivalently $\mathbf u\cdot \mathbf v=0$).
\end{lemma}

Hence, we have that \begin{equation}\label{cat_ecu3}\mathbf{a} \cdot \omega - \max_{\mathbf{u}, \mathbf{v} \in \mathsf{Z}(\mathbf{a})}(\deg(\mathrm{gcd}(X^\mathbf{u}, X^\mathbf{v}))) \leq \mathsf c(\mathbf{a}) \leq \mathbf{a} \cdot \omega,\end{equation} for each $\mathbf{a} \in \mathbb{N} \mathcal{A}.$

We see now that the second inequality becomes and equality precisely when $\mathbf a$ is a Betti element.

\begin{proposition}\label{cat-betti-half-factorial}
Let $\mathbf{b} \in \mathbb{N} \mathcal{A}$. Then $\mathbf{b} \in \mathrm{Betti}(\mathbb{N} \mathcal{A})$ if and only if  $\mathsf c(\mathbf{b}) = \mathbf{b} \cdot \omega$.
\end{proposition}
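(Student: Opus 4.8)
The plan is to prove both implications by relating the gcd of pairs of factorizations of $\mathbf{b}$ to the connectivity of $\nabla_{\mathbf{b}}$. Recall from \eqref{cat_ecu3} that $\mathsf c(\mathbf{b}) = \mathbf{b}\cdot\omega$ holds exactly when there exist factorizations $\mathbf{u},\mathbf{v}\in\mathsf Z(\mathbf{b})$ with $\gcd(X^\mathbf{u},X^\mathbf{v})=1$ (by Lemma \ref{distancias-half-factorial}, this is the only way to achieve distance $\mathbf{b}\cdot\omega$, and no chain can do better than a single step of maximal distance) \emph{and} no shorter chain works, i.e. every $N$-chain joining such a pair for $N<\mathbf{b}\cdot\omega$ fails. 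The key observation is that $\gcd(X^\mathbf{u},X^\mathbf{v})=1$ means $\{X^\mathbf{u},X^\mathbf{v}\}$ is not an edge of $\nabla_{\mathbf{b}}$, and more generally the edges of $\nabla_{\mathbf{b}}$ are precisely the pairs at distance strictly less than $\mathbf{b}\cdot\omega$. So I would recast the whole argument in terms of the graph $G_{\mathbf{b}}$ with vertex set $M_{\mathbf{b}}$ and edges those pairs $\{X^\mathbf{u},X^\mathbf{v}\}$ with $\gcd\neq 1$; this is the $1$-skeleton relevant to $\nabla_{\mathbf{b}}$, and $\mathbf{b}\in\mathrm{Betti}(\mathbb N\mathcal A)$ iff $G_{\mathbf{b}}$ is disconnected (using the characterization of Betti elements via connected components of $\nabla_{\mathbf{b}}$, noting $\gcd$ of a set is $\neq 1$ iff all pairwise gcds share a common variable — actually one must be slightly careful here, but for the $1$-skeleton the relevant fact is that connected components of $\nabla_{\mathbf{b}}$ and of $G_{\mathbf{b}}$ coincide).

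For the forward implication, suppose $\mathbf{b}\in\mathrm{Betti}(\mathbb N\mathcal A)$. Then $G_{\mathbf{b}}$ has at least two connected components; pick $\mathbf{u},\mathbf{v}$ in different components. Any chain $\mathbf{u}=\mathbf{u}_0,\ldots,\mathbf{u}_k=\mathbf{v}$ in $\mathsf Z(\mathbf{b})$ must contain at least one consecutive pair $\mathbf{u}_i,\mathbf{u}_{i+1}$ lying in different components of $G_{\mathbf{b}}$, hence not joined by an edge, hence with $\gcd(X^{\mathbf{u}_i},X^{\mathbf{u}_{i+1}})=1$; by Lemma \ref{distancias-half-factorial} this pair has distance exactly $\mathbf{b}\cdot\omega$. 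Therefore $\mathsf c(\mathbf{b})\geq \mathbf{b}\cdot\omega$, and combined with the upper bound in \eqref{cat_ecu3} we get equality.

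For the converse, suppose $\mathbf{b}\notin\mathrm{Betti}(\mathbb N\mathcal A)$, so $G_{\mathbf{b}}$ is connected. Then for any two factorizations $\mathbf{u},\mathbf{v}$ of $\mathbf{b}$ there is a path in $G_{\mathbf{b}}$ from $\mathbf{u}$ to $\mathbf{v}$, and every edge of this path is a pair with nontrivial gcd, hence of distance at most $\mathbf{b}\cdot\omega - 1$ by Lemma \ref{distancias-half-factorial}. This is an $N$-chain with $N = \mathbf{b}\cdot\omega - 1 < \mathbf{b}\cdot\omega$, so $\mathsf c(\mathbf{b}) \leq \mathbf{b}\cdot\omega - 1 < \mathbf{b}\cdot\omega$; in particular the equality fails. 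I expect the main obstacle to be the bookkeeping needed to pin down the precise relationship between "$\nabla_{\mathbf{b}}$ has more than two connected components" (the stated definition of Betti element) and "$G_{\mathbf{b}}$ is disconnected", since one must make sure that collapsing the simplicial complex to its $1$-skeleton does not change the number of connected components — this is standard (a simplicial complex and its $1$-skeleton have the same connected components) but should be invoked carefully, together with the fact that a set $F$ of monomials has $\gcd(F)\neq 1$ iff the induced subgraph on $F$ in $G_{\mathbf{b}}$ is such that all of $F$ shares a common variable, so that edges of $\nabla_{\mathbf{b}}$ are exactly edges of $G_{\mathbf{b}}$. Everything else is an immediate consequence of Lemma \ref{distancias-half-factorial} and \eqref{cat_ecu3}.
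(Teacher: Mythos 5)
Your proof is correct and follows essentially the same route as the paper: both directions reduce, via Lemma \ref{distancias-half-factorial} and \eqref{cat_ecu3}, to the observation that a step of distance $\mathbf{b}\cdot\omega$ in a chain is exactly a non-edge of $\nabla_{\mathbf{b}}$. The only cosmetic differences are that you argue the converse by contrapositive (connected implies $\mathsf c(\mathbf{b})\le \mathbf{b}\cdot\omega-1$) rather than directly, and that you make explicit the (standard, and harmless) passage to the $1$-skeleton, which the paper uses implicitly.
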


\begin{proof}
By definition, $\mathbf{b} \in \mathrm{Betti}(\mathbb{N} \mathcal{A}),$ if and only if, there exists $X^\mathbf{u}$ and $X^\mathbf{v} \in M_\mathbf{b}$ lying in different connected components of $\nabla_\mathbf{b}.$ Equivalently, for every chain, $X^{\mathbf{u}_0}, \ldots, X^{\mathbf{u}_k} \in M_\mathbf{b}$ from $X^\mathbf{u}$ to $X^\mathbf{v},$ there exist $j$ such that $\mathrm{gcd}(X^{\mathbf{u}_j}, X^{\mathbf{u}_{j+1}}) = 1;$ that is, $\mathrm{d}({\mathbf{u}_j}, {\mathbf{u}_{j+1}}) = \mathbf{b} \cdot \omega,$ by Lemma \ref{distancias-half-factorial}. Now, since $\mathsf c(\mathbf{b}) \leq  \mathbf{b} \cdot \omega,$  we obtain that the equality must hold. Conversely, if $\mathsf c(\mathbf{b}) =  \mathbf{b} \cdot \omega,$ then there exists $X^\mathbf{u}$ and $X^\mathbf{v} \in M_\mathbf{b}$ such that $\mathrm d(\mathbf u,\mathbf v)=\mathsf c(\mathbf{b})$, and for every chain $X^{\mathbf{u}_0}, \ldots, X^{\mathbf{u}_k}  \in M_\mathbf{b}$ from $X^\mathbf{u}$ to $X^\mathbf{v}$, there exists $j$ such that $\mathrm{d}({\mathbf{u}_j},{\mathbf{u}_{j+1}}) \ge  \mathsf c(\mathbf{b}) =  \mathbf{b} \cdot \omega$. By Lemma \ref{distancias-half-factorial}, this forces  $\mathrm{gcd}(X^{\mathbf{u}_j}, X^{\mathbf{u}_{j+1}}) = 1$. So $X^\mathbf{u}$ and $X^\mathbf{v} \in M_\mathbf{b}$ belong to different connected components of $\nabla_\mathbf{b}$, and we are done.
\end{proof}

We next show that all possible catenary degrees in a half-factorial monoid are attained in its Betti elements.

\begin{theorem}\label{realization-betti-cat}
Let $\mathbb N\mathcal A$ be half-factorial, and let $\mathbf{a} \in \mathbb{N} \mathcal{A}$ with $\#\mathsf Z(a)\ge 2$. There exists $\mathbf{b} \in \mathrm{Betti}(\mathbb{N} \mathcal{A})$ such that $\mathsf c(\mathbf{a}) =\mathsf  c(\mathbf{b}).$
\end{theorem}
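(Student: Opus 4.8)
The plan is to exhibit the Betti element $\mathbf{b}$ explicitly, in one step and with no induction, by cancelling the common part of a well-chosen pair of factorizations of $\mathbf{a}$ witnessing the catenary degree. Put $c := \mathsf{c}(\mathbf{a})$; as $\#\mathsf{Z}(\mathbf{a})\ge 2$ we have $c\ge 1$. Call $\mathbf{u},\mathbf{v}\in\mathsf{Z}(\mathbf{a})$ \emph{$(c-1)$-equivalent} when there is a $(c-1)$-chain from $\mathbf{u}$ to $\mathbf{v}$; this is an equivalence relation on the finite set $\mathsf{Z}(\mathbf{a})$, it has at least two classes (by the definition of $\mathsf{c}(\mathbf{a})$), and any two factorizations of $\mathbf{a}$ are joined by a $c$-chain. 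I would pick $\mathbf{u},\mathbf{v}$ in different classes and a $c$-chain $\mathbf{u}=\mathbf{u}_0,\dots,\mathbf{u}_k=\mathbf{v}$ between them; since the endpoints lie in different classes, some consecutive pair $\mathbf{w}:=\mathbf{u}_i$, $\mathbf{w}':=\mathbf{u}_{i+1}$ does too, and then $\mathrm{d}(\mathbf{w},\mathbf{w}')\le c$ (it is a step of the chain) while $\mathrm{d}(\mathbf{w},\mathbf{w}')\ge c$ (a step of length $\le c-1$ would put $\mathbf{w},\mathbf{w}'$ in the same class). Thus $\mathrm{d}(\mathbf{w},\mathbf{w}')=c$, and $\mathbf{w},\mathbf{w}'$ are not $(c-1)$-equivalent (in particular $\mathbf{w}\neq\mathbf{w}'$).

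Next I would set $X^{\mathbf{h}}:=\gcd(X^{\mathbf{w}},X^{\mathbf{w}'})$ and $\mathbf{b}:=\mathbf{a}-\pi(\mathbf{h})$, so that $\mathbf{w}_1:=\mathbf{w}-\mathbf{h}$ and $\mathbf{w}'_1:=\mathbf{w}'-\mathbf{h}$ are distinct factorizations of $\mathbf{b}$ with $\gcd(X^{\mathbf{w}_1},X^{\mathbf{w}'_1})=1$; hence $\#\mathsf{Z}(\mathbf{b})\ge 2$. By Lemma~\ref{distancias-half-factorial}, $\deg(X^{\mathbf{h}})=\mathbf{a}\cdot\omega-\mathrm{d}(\mathbf{w},\mathbf{w}')=\mathbf{a}\cdot\omega-c$, so by Lemma~\ref{longitudes-half-factorial}, $\mathbf{b}\cdot\omega=|\mathbf{w}_1|=|\mathbf{w}|-\deg(X^{\mathbf{h}})=\mathbf{a}\cdot\omega-(\mathbf{a}\cdot\omega-c)=c$, and~\eqref{cat_ecu3} already yields $\mathsf{c}(\mathbf{b})\le\mathbf{b}\cdot\omega=c$. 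For the reverse inequality I would use that the distance is invariant under adjoining a common monomial: $\mathrm{d}(\mathbf{z}+\mathbf{h},\mathbf{z}'+\mathbf{h})=\mathrm{d}(\mathbf{z},\mathbf{z}')$, because $\gcd(X^{\mathbf{z}+\mathbf{h}},X^{\mathbf{z}'+\mathbf{h}})=X^{\mathbf{h}}\gcd(X^{\mathbf{z}},X^{\mathbf{z}'})$ makes both terms in the definition of $\mathrm{d}$ grow by $\deg(X^{\mathbf{h}})$. Therefore any $(c-1)$-chain from $\mathbf{w}_1$ to $\mathbf{w}'_1$ inside $\mathsf{Z}(\mathbf{b})$ would, after adding $\mathbf{h}$ to each of its terms, become a $(c-1)$-chain from $\mathbf{w}$ to $\mathbf{w}'$ inside $\mathsf{Z}(\mathbf{a})$, contradicting that $\mathbf{w},\mathbf{w}'$ are not $(c-1)$-equivalent. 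So $\mathsf{c}(\mathbf{b})\ge c$, whence $\mathsf{c}(\mathbf{b})=c=\mathbf{b}\cdot\omega$; by Proposition~\ref{cat-betti-half-factorial}, $\mathbf{b}\in\mathrm{Betti}(\mathbb{N}\mathcal{A})$, and $\mathsf{c}(\mathbf{b})=c=\mathsf{c}(\mathbf{a})$.

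I expect the only delicate point to be the selection of $\mathbf{w},\mathbf{w}'$: they must simultaneously realize the catenary degree and separate two $(c-1)$-equivalence classes, so that cancelling their common part lands on an element whose catenary degree cannot drop below its (now smaller) length. Everything else is routine: the length of the cancelled part $\mathbf{h}$ is pinned down by Lemma~\ref{distancias-half-factorial}, and the translation invariance of $\mathrm{d}$ transfers the non-existence of short chains verbatim between $\mathsf{Z}(\mathbf{b})$ and $\mathsf{Z}(\mathbf{a})$. (When $\mathbf{a}$ is already a Betti element the construction degenerates to $\mathbf{b}=\mathbf{a}$: then $c=\mathbf{a}\cdot\omega$, hence $\deg(X^{\mathbf{h}})=0$ and $\mathbf{h}=\mathbf{0}$.)
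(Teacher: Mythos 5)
Your proposal is correct and follows essentially the same route as the paper: both pick a pair of factorizations realizing $\mathsf c(\mathbf a)$ that cannot be joined by any shorter chain, pass to $\mathbf b=\mathbf a-\deg_{\mathcal A}(\gcd)$ of that pair, use Lemma~\ref{distancias-half-factorial} to get $\mathbf b\cdot\omega=\mathsf c(\mathbf a)$, and transfer the non-existence of short chains between $\mathsf Z(\mathbf b)$ and $\mathsf Z(\mathbf a)$ via the translation invariance of $\mathrm d$ (the paper's ``lifting'' of chains). The only differences are presentational: you justify the choice of the critical pair more explicitly via $(c-1)$-equivalence classes, and you conclude $\mathbf b\in\mathrm{Betti}(\mathbb N\mathcal A)$ from $\mathsf c(\mathbf b)=\mathbf b\cdot\omega$ and Proposition~\ref{cat-betti-half-factorial} rather than by exhibiting the disconnection of $\nabla_{\mathbf b}$ directly.
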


\begin{proof}
Let $\omega\in \mathbb Q^d$ be such that $A\, \omega^T=(1,\ldots,1)^T$. 

There exist $X^\mathbf{u}, X^\mathbf{v} \in M_\mathbf{a},$ such that $\mathrm{d}(\mathbf{u}, \mathbf{v}) = \mathsf c(\mathbf{a})$ and, for every chain, $X^{\mathbf{u}_0}, \ldots, X^{\mathbf{u}_k} \in M_\mathbf{a}$ from $X^\mathbf{u}$ to $X^\mathbf{v},$ there exists $j$ with $\mathrm{d}({\mathbf{u}_j}, {\mathbf{u}_{j+1}}) \geq \mathsf c(\mathbf{a});$ thus, for such $j,$ we have $$\mathbf{a} \cdot \omega - \deg(\mathrm{gcd}(X^{\mathbf{u}_j}, X^{\mathbf{u}_{j+1}})) \geq \mathsf c(\mathbf{a}) = \mathbf{a} \cdot \omega - \deg(\mathrm{gcd}(X^\mathbf{u}, X^\mathbf{v})),$$ that is to say, $\deg(\mathrm{gcd}(X^\mathbf{u}, X^\mathbf{v})) \geq \deg(\mathrm{gcd}(X^{\mathbf{u}_j}, X^{\mathbf{u}_{j+1}})).$ In particular, if $\mathrm{gcd}(X^\mathbf{u}, X^\mathbf{v}) \mid \mathrm{gcd}(X^{\mathbf{u}_j}, X^{\mathbf{u}_{j+1}}),$ then they must be equal.

Let $\mathbf{b} = \mathbf{a} - \mathrm{deg}_\mathcal{A}(\mathrm{gcd}(X^\mathbf{u}, X^\mathbf{v})).$ The monomials $X^ {\mathbf{u}'} = X^\mathbf{u}/\mathrm{gcd}(X^\mathbf{u}, X^\mathbf{v})$ and $X^ {\mathbf{v}'} = X^ \mathbf{v}/\mathrm{gcd}(X^\mathbf{u}, X^\mathbf{v})$ have $\mathcal{A}-$degree $\mathbf{b}$ and $$\mathbf{b} \cdot \omega = \mathrm{d}( {\mathbf{u}'}, {\mathbf{v}'}) =  \mathbf{a} \cdot \omega - \mathrm{deg}(\mathrm{gcd}(X^\mathbf{u}, X^\mathbf{v})) = \mathsf c(\mathbf{a}).$$ Now, we prove that $\mathbf{b} \in \mathrm{Betti}(\mathbb{N} \mathcal{A})$. Every chain, $X^{\mathbf{u}'_0}, \ldots, X^{\mathbf{u}'_k} \in  M_\mathbf{b}$ from $X^{\mathbf{u}'}$ to $X^{\mathbf{v}'}$ lifts to a chain, $X^{\mathbf{u}_0}, \ldots, X^{\mathbf{u}_k} \in M_\mathbf{a}$ from $X^\mathbf{u}$ to $X^\mathbf{v}$ (indeed, it to suffices to take $X^{\mathbf{u}_i} = \mathrm{gcd}(X^\mathbf{u}, X^\mathbf{v}) X^{\mathbf{u}'_i},$ for all $i$), and $\mathrm{d}( {\mathbf{u}_j}, {\mathbf{u}_{j+1}}) = \mathrm{d}( {\mathbf{u}'_j}, {\mathbf{u}'_{j+1}})$. By the above arguments, we conclude that there exists $j$ such that $\mathrm{gcd}(X^{\mathbf{u}_j}, X^{\mathbf{u}_{j+1}}) = \mathrm{gcd}(X^\mathbf{u}, X^\mathbf{v}),$ hence $$\mathrm{gcd}(X^{\mathbf{u}'_j}, X^{\mathbf{u}'_{j+1}}) = 1$$ for some $j.$ 

Therefore, it follows that $\mathbf{b}$ is a Betti degree (because $\nabla_\mathbf{b}$ is not connected) and, by Proposition \ref{cat-betti-half-factorial}, $\mathsf c(\mathbf{b}) = \mathbf{b} \cdot \omega = \mathsf c(\mathbf{a}).$ 
\end{proof}

This result does not hold for non half-factorial monoids.

\begin{example}\label{ej-nabla}
Let $\mathcal{A} = \{31, 47, 57\} \subseteq \mathbb{N}$. Then $\mathrm{Betti}(\mathbb{N} \mathcal{A}) = \{171,517,527\}$, and $\mathsf{c}(171) = 5,\ \mathsf{c}(517) = 15$ and $\mathsf{c}(527) = 17.$ However, $\mathsf{c}(564) = 14 \not\in \{5,15,17\}$.

The picture represents $\nabla_{564}.$ The dashed line does not belong to $\nabla_{564}.$ The edges are labeled with the distances between their ends.
\begin{center}
\begin{tikzpicture}
\draw [line width=1.2pt] (1.5,2.6)-- (0,0);
\draw [line width=1.2pt,dash pattern=on 5pt off 5pt] (3,0)-- (1.5,2.6);
\draw [line width=1.2pt] (0,0)-- (3,0);
\fill (0,0) circle (2pt); \draw (-0.5,0.33) node {$x^{13}yz^2$};
\fill  (3,0) circle (2pt); \draw (3.5,0.33) node {$y^{12}$};
\fill (1.5,2.6) circle (2pt); \draw (2,3) node {$x^9 z^5$};
\draw (0.5,1.66) node {5};
\draw (2.66,1.66) node {14};
\draw (1.66,-0.33) node {15};
\end{tikzpicture}
\end{center}
\end{example}

Recall that the \textbf{total degree} of a polynomial $f \in \mathbbmss{k}[X]$ is the largest of the degrees of its monomials. 
So, the above theorem can be restated as saying that \emph{the catenary degrees of $\mathbb{N} \mathcal{A}$ are the total  degrees of the minimal binomial generators of $I_\mathcal{A}.$}

\begin{corollary}\label{cat-total degree}
The catenary degree of $\mathbb{N} \mathcal{A}$ is the maximum of the total degrees of a minimal system of binomial generators of $I_\mathcal{A}.$
\end{corollary}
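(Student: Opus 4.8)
The plan is to deduce Corollary \ref{cat-total degree} directly from Theorem \ref{realization-betti-cat} together with Proposition \ref{cat-betti-half-factorial} and the classical dictionary between minimal presentations and minimal binomial generators of $I_\mathcal{A}$. First I would recall that, since $\mathbb N\mathcal A$ is half-factorial, $I_\mathcal{A}$ is homogeneous (as noted after Lemma \ref{longitudes-half-factorial}), so the total degree of a binomial $X^\mathbf{u}-X^\mathbf{v}$ in $\mathcal{A}$-degree $\mathbf{a}$ equals $\deg(X^\mathbf{u})=\deg(X^\mathbf{v})=\mathbf{a}\cdot\omega$; in particular the total degree of a minimal binomial generator only depends on its $\mathcal{A}$-degree. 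Next I would invoke the fact quoted in the excerpt (see \cite[Corollary 4]{OjVi1} and \cite[Section 2]{uniquely}) that $\mathbf{b}\in\mathrm{Betti}(\mathbb N\mathcal A)$ if and only if $I_\mathcal{A}$ has a minimal binomial generator in $\mathcal{A}$-degree $\mathbf{b}$, so that the set of total degrees of a minimal system of binomial generators of $I_\mathcal{A}$ is exactly $\{\mathbf{b}\cdot\omega \mid \mathbf{b}\in\mathrm{Betti}(\mathbb N\mathcal A)\}$.

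The core of the argument is then a two-way inequality. For the direction $\mathsf c(\mathbb N\mathcal A)\le \max\{\mathbf{b}\cdot\omega \mid \mathbf{b}\in\mathrm{Betti}(\mathbb N\mathcal A)\}$: given any $\mathbf{a}\in\mathbb N\mathcal A$, either $\#\mathsf Z(\mathbf a)\le 1$, in which case $\mathsf c(\mathbf a)=0$, or $\#\mathsf Z(\mathbf a)\ge 2$ and Theorem \ref{realization-betti-cat} produces $\mathbf{b}\in\mathrm{Betti}(\mathbb N\mathcal A)$ with $\mathsf c(\mathbf a)=\mathsf c(\mathbf b)$, and by Proposition \ref{cat-betti-half-factorial}, $\mathsf c(\mathbf b)=\mathbf{b}\cdot\omega$, which is the total degree of a minimal binomial generator. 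Taking the maximum over all $\mathbf{a}$ gives the bound. For the reverse direction: every Betti element $\mathbf{b}$ satisfies $\mathsf c(\mathbf b)=\mathbf{b}\cdot\omega$ by Proposition \ref{cat-betti-half-factorial}, so $\mathsf c(\mathbb N\mathcal A)\ge \mathsf c(\mathbf b)=\mathbf{b}\cdot\omega$ for every such $\mathbf{b}$, whence $\mathsf c(\mathbb N\mathcal A)\ge\max\{\mathbf{b}\cdot\omega \mid \mathbf{b}\in\mathrm{Betti}(\mathbb N\mathcal A)\}$. Combining the two gives the equality, and rephrasing $\mathbf{b}\cdot\omega$ as the total degree of the corresponding minimal binomial generator finishes the proof.

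I do not expect a genuine obstacle here; the only point requiring care is bookkeeping about what "a minimal system of binomial generators" means and why its set of total degrees is independent of the chosen system. This is exactly the content of the correspondence between minimal presentations of $\mathbb N\mathcal A$ and minimal generating sets of $I_\mathcal{A}$, which attaches a well-defined multiset of $\mathcal{A}$-degrees (the Betti elements, with multiplicities given by the ranks in the minimal free resolution) independent of choices; since total degree is determined by $\mathcal{A}$-degree in the homogeneous case, the maximal total degree is likewise well-defined. I would state this briefly, cite \cite{herzog, OjVi1, uniquely}, and keep the proof to a few lines, since all the substance has already been carried out in Theorem \ref{realization-betti-cat} and Proposition \ref{cat-betti-half-factorial}.
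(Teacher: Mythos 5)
Your proposal is correct and matches the paper's intended derivation: the paper presents this corollary precisely as a restatement of Theorem \ref{realization-betti-cat} combined with Proposition \ref{cat-betti-half-factorial} and the identification of Betti elements with the $\mathcal{A}$-degrees of minimal binomial generators, which is exactly the two-way inequality you spell out. Your extra care about the total degree being determined by the $\mathcal{A}$-degree (via homogeneity of $I_\mathcal{A}$) and the independence of the choice of minimal generating system is sound and consistent with the paper's remarks preceding the corollary.
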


The above corollary can also be obtained by adapting the proof of \cite[Theorem 3.1.]{Chapman} to the half-factorial case.

\section{Applications}

Let $\mathcal{A} = \{\mathbf{a}_1, \ldots, \mathbf{a}_n\} \subseteq \mathbb{Z}^d$. In this section, from $\mathbb N \mathcal A,$ we construct two half-factorial monoids. The catenary degree of the first one agrees with the equal catenary degree of the original monoid; while that of the second provides a refinement of the monotone catenary of $\mathbb N \mathcal A$ as an upper bound of its ordinary catenary degree.


For $\mathbf{a} \in \mathbb N \mathcal{A}$ and $i \in \mathsf L (\mathbf{a})$, set $\mathsf{Z}_i(\mathbf{a}) = \{ \mathbf{u} \in \mathsf Z (\mathbf{a})\ : \ \vert \mathbf{u} \vert = i\}.$ The \textbf{equal catenary degree} of $\mathbf{a} \in \mathbb N \mathcal{A}$, $\mathsf{c_{eq}}(\mathbf{a})$, is the minimum $N \in \mathbb{N}$ such that for any $i \in \mathsf L (\mathbf{a})$ and $\mathbf{u}, \mathbf{v} \in \mathsf{Z}_i(\mathbf{a}),$ there is a $N-$ chain from $\mathbf{u}$ to $\mathbf{v}$ in $\mathsf{Z}_i(\mathbf{a})$.

Define $\mathcal A^\mathsf{eq}=\{\mathbf (1,\mathbf a_1),\ldots, (1,\mathbf a_n)\}\subseteq \mathbb N\times \mathbb N^d$. Notice that $(i,\mathbf{a})\in \mathbb{N} \mathcal A^\mathsf{eq}$ if and only if $\mathbf{a} \in \mathbb N \mathcal A$ and $i\in \mathsf L(\mathbf{a})$. Also, observe that $ \mathbb{N} \mathcal A^\mathsf{eq}$ is a half-factorial monoid (just take $\omega = (1,0, \ldots, 0)$). The trick of adding an extra coordinate was already used in \cite{same-length}, where the authors were looking for the existence of factorizations of equal length.

The next result follows easily  from the definitions.

\begin{proposition}\label{Prop ceq-eqc}
$\mathsf{c_{eq}}(\mathbb N \mathcal{A} )=\mathsf c(\mathbb N \mathcal{A} ^\mathsf{eq})$.
\end{proposition}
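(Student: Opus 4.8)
The plan is to establish a bijection between factorizations in $\mathbb{N}\mathcal{A}^{\mathsf{eq}}$ and length-stratified factorizations in $\mathbb{N}\mathcal{A}$, and to check that this bijection is an isometry with respect to the distance $\mathrm{d}$. First I would record the elementary observation (already noted in the excerpt) that $(i,\mathbf{a}) \in \mathbb{N}\mathcal{A}^{\mathsf{eq}}$ precisely when $\mathbf{a} \in \mathbb{N}\mathcal{A}$ and $i \in \mathsf{L}(\mathbf{a})$, and that the projection $\mathbb{N}^n \to \mathbb{N}^n$ on factorization vectors (which is literally the identity on $\mathbb{N}^n$, since $\mathcal{A}^{\mathsf{eq}}$ has the same number of generators as $\mathcal{A}$) restricts to a bijection $\mathsf{Z}_{\mathbb{N}\mathcal{A}^{\mathsf{eq}}}(i,\mathbf{a}) \to \mathsf{Z}_i(\mathbf{a})$. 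Indeed $\mathbf{u}$ is a factorization of $(i,\mathbf{a})$ over $\mathcal{A}^{\mathsf{eq}}$ iff $\sum_j u_j(1,\mathbf{a}_j) = (i,\mathbf{a})$, i.e. iff $\pi(\mathbf{u}) = \mathbf{a}$ and $|\mathbf{u}| = \sum_j u_j = i$; that is exactly the condition $\mathbf{u} \in \mathsf{Z}_i(\mathbf{a})$.

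The key point is that this identification is distance-preserving. Since the distance $\mathrm{d}(\mathbf{u},\mathbf{v}) = \max(|\mathbf{u}|,|\mathbf{v}|) - \deg(\gcd(X^{\mathbf{u}},X^{\mathbf{v}}))$ depends only on the vectors $\mathbf{u},\mathbf{v} \in \mathbb{N}^n$ and not on which semigroup they factor in, two factorizations $\mathbf{u},\mathbf{v} \in \mathsf{Z}_i(\mathbf{a})$ have the same distance whether viewed in $\mathbb{N}\mathcal{A}$ or in $\mathbb{N}\mathcal{A}^{\mathsf{eq}}$. Consequently, an $N$-chain from $\mathbf{u}$ to $\mathbf{v}$ inside $\mathsf{Z}_i(\mathbf{a})$ (the constraint in the definition of $\mathsf{c_{eq}}$) corresponds verbatim to an $N$-chain from $\mathbf{u}$ to $\mathbf{v}$ inside $\mathsf{Z}_{\mathbb{N}\mathcal{A}^{\mathsf{eq}}}(i,\mathbf{a})$, and vice versa. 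Hence $\mathsf{c_{eq}}(\mathbf{a}) = \max_{i \in \mathsf{L}(\mathbf{a})} \mathsf{c}(i,\mathbf{a})$, where on the right $\mathsf{c}$ is the ordinary catenary degree computed in $\mathbb{N}\mathcal{A}^{\mathsf{eq}}$; note that for $i \in \mathsf{L}(\mathbf{a})$ the element $(i,\mathbf{a})$ has all its factorizations of the same length (namely $i$), so the ordinary catenary degree of $(i,\mathbf{a})$ is automatically computed within a single length stratum, matching the $\mathsf{c_{eq}}$ constraint.

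Finally I would take the maximum over all elements. On the one hand, $\mathsf{c}(\mathbb{N}\mathcal{A}^{\mathsf{eq}}) = \max\{\mathsf{c}(i,\mathbf{a}) : (i,\mathbf{a}) \in \mathbb{N}\mathcal{A}^{\mathsf{eq}}\} = \max\{\mathsf{c}(i,\mathbf{a}) : \mathbf{a} \in \mathbb{N}\mathcal{A},\, i \in \mathsf{L}(\mathbf{a})\}$; on the other hand $\mathsf{c_{eq}}(\mathbb{N}\mathcal{A}) = \max_{\mathbf{a} \in \mathbb{N}\mathcal{A}} \mathsf{c_{eq}}(\mathbf{a}) = \max_{\mathbf{a}}\max_{i \in \mathsf{L}(\mathbf{a})} \mathsf{c}(i,\mathbf{a})$, and these two nested maxima range over the same index set, so they agree. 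This gives $\mathsf{c_{eq}}(\mathbb{N}\mathcal{A}) = \mathsf{c}(\mathbb{N}\mathcal{A}^{\mathsf{eq}})$. There is no real obstacle here; the only point requiring a moment's care is the bookkeeping that the ordinary catenary degree of $(i,\mathbf{a})$ in the half-factorial monoid $\mathbb{N}\mathcal{A}^{\mathsf{eq}}$ really does coincide with the single-stratum catenary degree appearing in the definition of $\mathsf{c_{eq}}$ — which is immediate once one observes that every factorization of $(i,\mathbf{a})$ has length exactly $i$, so the stratification is trivial for that element.
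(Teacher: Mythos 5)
Your argument is correct: the identification $\mathsf{Z}_{\mathbb{N}\mathcal{A}^{\mathsf{eq}}}\bigl((i,\mathbf{a})\bigr)=\mathsf{Z}_i(\mathbf{a})$, the fact that $\mathrm{d}$ depends only on the vectors in $\mathbb{N}^n$, and the matching of the index sets for the outer maxima together constitute exactly the unwinding of the definitions that the paper invokes when it states the result ``follows easily from the definitions'' without further proof. So your proposal is a faithful, fully detailed version of the paper's (omitted) argument, including the one point worth making explicit, namely that the ordinary catenary degree of $(i,\mathbf{a})$ in the half-factorial monoid $\mathbb{N}\mathcal{A}^{\mathsf{eq}}$ is automatically a single-stratum catenary degree.
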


As consequence of Corollary \ref{cat-total degree}, we obtain the following.

\begin{corollary}
The equal catenary degree of $\mathbb{N} \mathcal{A}$ is the maximum of the total degrees of a minimal system of binomial generators of $I_{\mathcal{A}^{\mathsf eq}}.$
\end{corollary}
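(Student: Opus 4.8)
The plan is to chain together two results already established in the excerpt: Proposition \ref{Prop ceq-eqc}, which identifies $\mathsf{c_{eq}}(\mathbb{N}\mathcal{A})$ with $\mathsf{c}(\mathbb{N}\mathcal{A}^{\mathsf{eq}})$, and Corollary \ref{cat-total degree}, which says that for any half-factorial monoid the catenary degree equals the maximum of the total degrees of a minimal system of binomial generators of the associated toric ideal. Since $\mathbb{N}\mathcal{A}^{\mathsf{eq}}$ is half-factorial (as observed right after the definition of $\mathcal{A}^{\mathsf{eq}}$, with witness $\omega = (1,0,\ldots,0)$), Corollary \ref{cat-total degree} applies verbatim to it and yields $\mathsf{c}(\mathbb{N}\mathcal{A}^{\mathsf{eq}}) = \max\{\deg(f) : f \text{ in a minimal binomial generating set of } I_{\mathcal{A}^{\mathsf{eq}}}\}$.

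Concretely, I would first invoke Proposition \ref{Prop ceq-eqc} to write $\mathsf{c_{eq}}(\mathbb{N}\mathcal{A}) = \mathsf{c}(\mathbb{N}\mathcal{A}^{\mathsf{eq}})$. Then I would observe that $\mathbb{N}\mathcal{A}^{\mathsf{eq}}$ is half-factorial, so Corollary \ref{cat-total degree} applies with $\mathcal{A}$ replaced by $\mathcal{A}^{\mathsf{eq}}$, giving $\mathsf{c}(\mathbb{N}\mathcal{A}^{\mathsf{eq}})$ as the stated maximum of total degrees over $I_{\mathcal{A}^{\mathsf{eq}}}$. Composing the two equalities finishes the argument. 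The proof is essentially two lines, so the write-up should simply state these two substitutions without elaboration.

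I do not anticipate a genuine obstacle here, since both ingredients are already in hand; the only thing to be a little careful about is that Corollary \ref{cat-total degree} was phrased for an arbitrary half-factorial $\mathbb{N}\mathcal{A}$, so one must explicitly note the hypothesis is met for $\mathcal{A}^{\mathsf{eq}}$ before applying it. A remark worth including, if desired, is that the total degree in $I_{\mathcal{A}^{\mathsf{eq}}}$ agrees with the total degree in $I_{\mathcal{A}}$ on a binomial $X^{\mathbf{u}} - X^{\mathbf{v}}$ only when $|\mathbf{u}| = |\mathbf{v}|$, which is automatic for the homogeneous ideal $I_{\mathcal{A}^{\mathsf{eq}}}$; this is exactly why passing to $\mathcal{A}^{\mathsf{eq}}$ isolates the equal-length factorizations. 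But for the bare statement of the corollary, the two-step composition is all that is needed.

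\begin{proof}
By Proposition \ref{Prop ceq-eqc}, $\mathsf{c_{eq}}(\mathbb{N}\mathcal{A}) = \mathsf{c}(\mathbb{N}\mathcal{A}^{\mathsf{eq}})$. Since $\mathbb{N}\mathcal{A}^{\mathsf{eq}}$ is half-factorial (with $\omega = (1,0,\ldots,0)$), Corollary \ref{cat-total degree} applied to $\mathcal{A}^{\mathsf{eq}}$ shows that $\mathsf{c}(\mathbb{N}\mathcal{A}^{\mathsf{eq}})$ equals the maximum of the total degrees of a minimal system of binomial generators of $I_{\mathcal{A}^{\mathsf{eq}}}$. Combining the two equalities gives the claim.
\end{proof}
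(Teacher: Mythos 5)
Your proposal is correct and matches the paper's (implicit) argument exactly: the paper presents this corollary as an immediate consequence of Proposition \ref{Prop ceq-eqc} together with Corollary \ref{cat-total degree} applied to the half-factorial monoid $\mathbb{N}\mathcal{A}^{\mathsf{eq}}$, which is precisely your two-step composition. Your explicit remark that one must check the half-factoriality hypothesis for $\mathcal{A}^{\mathsf{eq}}$ is the right point of care, and nothing further is needed.
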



Set $\mathcal A^{\textsf{hom}}=\{\mathbf e_0, (1,\mathbf a_1),\ldots, (1,\mathbf a_n)\}\subseteq \mathbb N\times \mathbb N^d$, with $\mathbf e_0= (1,0,\ldots,0)$. As in the previous case, $\mathbb N \mathcal A^{\textsf{hom}}$ is a half-factorial monoid with $\omega = (1,0, \ldots, 0).$ Compare $\mathbf e_0$ with the extra variable  $z$ used in \cite[Section 5.4.6]{char-cmon-2}

Let us see what is the relationship between the factorizations in $\mathbb N \mathcal{A}$ and $\mathbb N \mathcal{A}^{\mathsf{hom}}$.

\begin{lemma}\label{fac-homogeneo}
$\mathsf Z \big( (i, \mathbf{a}) \big)= \big\{ (j,\mathbf{u}) \in \mathbb{N} \times  \mathsf Z (\mathbf{a}) ~|~ j = i-|\mathbf{u}| \big\}$.
\end{lemma}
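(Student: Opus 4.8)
The plan is to unwind the definitions of $\mathcal{A}^{\mathsf{hom}}$ and of the factorization homomorphism $\pi$ attached to it, and simply identify the solution set. Write $\pi_{\mathsf{hom}}\colon \mathbb{N}^{n+1}\to \mathbb{N}\mathcal{A}^{\mathsf{hom}}$ for the map sending the standard basis vector indexed by $0$ to $\mathbf{e}_0=(1,0,\ldots,0)$ and the basis vector indexed by $i\in\{1,\ldots,n\}$ to $(1,\mathbf{a}_i)$. A factorization of $(i,\mathbf{a})$ is thus a tuple $(j,u_1,\ldots,u_n)\in\mathbb{N}\times\mathbb{N}^n$ with $j\,\mathbf{e}_0+\sum_{\ell=1}^n u_\ell\,(1,\mathbf{a}_\ell)=(i,\mathbf{a})$. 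Reading this equation coordinatewise, the last $d$ coordinates give $\sum_{\ell=1}^n u_\ell\,\mathbf{a}_\ell=\mathbf{a}$, i.e. $\mathbf{u}=(u_1,\ldots,u_n)\in\mathsf{Z}(\mathbf{a})$; and the first coordinate gives $j+\sum_{\ell=1}^n u_\ell=i$, i.e. $j=i-|\mathbf{u}|$ (where $|\mathbf{u}|=\sum_\ell u_\ell$).

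First I would establish the inclusion $\subseteq$: given $(j,\mathbf{u})\in\mathsf{Z}\big((i,\mathbf{a})\big)$, the coordinatewise reading above shows $\mathbf{u}\in\mathsf{Z}(\mathbf{a})$ and $j=i-|\mathbf{u}|$, which is exactly membership in the right-hand set. Conversely, for $\supseteq$: given $\mathbf{u}\in\mathsf{Z}(\mathbf{a})$ and setting $j=i-|\mathbf{u}|$, I must check first that $(j,\mathbf{u})$ genuinely lies in $\mathbb{N}^{n+1}$, i.e. that $j\ge 0$; but this is automatic from the hypothesis that the tuple is considered as an element of $\mathsf{Z}\big((i,\mathbf{a})\big)$ only when $i\ge|\mathbf{u}|$ — more precisely, the right-hand side of the claimed identity already restricts to those $(j,\mathbf{u})$ with $j=i-|\mathbf{u}|\in\mathbb{N}$, so the set is vacuous in coordinates where this fails. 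Then $\pi_{\mathsf{hom}}(j,\mathbf{u})=\big(j+|\mathbf{u}|,\sum_\ell u_\ell\mathbf{a}_\ell\big)=(i,\mathbf{a})$, giving the reverse inclusion.

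There is essentially no obstacle here; the only point requiring a word of care is the bookkeeping of the extra zeroth coordinate and the observation that the set on the right is implicitly indexed only over those $\mathbf{u}\in\mathsf{Z}(\mathbf{a})$ with $|\mathbf{u}|\le i$, so that $j=i-|\mathbf{u}|$ is a legitimate nonnegative integer. It may be worth recording, as a remark for later use, that this shows $(i,\mathbf{a})\in\mathbb{N}\mathcal{A}^{\mathsf{hom}}$ precisely when $\mathbf{a}\in\mathbb{N}\mathcal{A}$ and $i\ge \min\mathsf{L}(\mathbf{a})$, paralleling the description of $\mathbb{N}\mathcal{A}^{\mathsf{eq}}$ given just before Proposition \ref{Prop ceq-eqc}, and that the map $\mathbf{u}\mapsto (i-|\mathbf{u}|,\mathbf{u})$ is a bijection between the relevant subset of $\mathsf{Z}(\mathbf{a})$ and $\mathsf{Z}\big((i,\mathbf{a})\big)$ — a fact that will presumably be exploited to relate the (homogeneous) catenary degree of $\mathbb{N}\mathcal{A}^{\mathsf{hom}}$ to that of $\mathbb{N}\mathcal{A}$.
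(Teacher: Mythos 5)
Your proof is correct and follows essentially the same route as the paper: read the defining equation $u_0\mathbf e_0+\sum_{\ell}u_\ell(1,\mathbf a_\ell)=(i,\mathbf a)$ coordinatewise to get $\mathbf u\in\mathsf Z(\mathbf a)$ and $j=i-|\mathbf u|$, with the reverse inclusion being immediate. The paper's proof is just a terser version of the same computation.
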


\begin{proof}
Let $(u_0, \ldots, u_n) \in \mathsf Z \big( (i, \mathbf{a}) \big)$. Then $u_0\mathbf e_0+u_1(1,\mathbf a_1)+ \cdots + u_n(1,\mathbf a_n)=(i,\mathbf a)$. This implies that $\mathbf a =u_1\mathbf a_1+\cdots+ u_n \mathbf a_n$ and $i=u_0+u_1+\cdots +u_n$. Take $j=u_0$ and $\mathbf{u}=(u_1,\ldots, u_n)$. The other inclusion is also straightforward.
\end{proof}

Now, we see that the distances of factorizations of an element in $\mathbb N \mathcal{A}^{\mathsf{hom}}$ are ruled by the factorizations of the corresponding one in $\mathbb N \mathcal{A}$.

\begin{lemma}\label{distancias}
 Let $(i,\mathbf{a})\in \mathbb N \mathcal{A}^{\textsf{hom}}$, and let $(j_\mathbf{u},\mathbf{u}), (j_\mathbf{v},\mathbf{v})\in \mathsf Z \big( (i, \mathbf{a}) \big)$. Then $\mathrm d\big((j_\mathbf{u},\mathbf{u}),(j_\mathbf{v},\mathbf{v})\big)=\mathrm d(\mathbf{u},\mathbf{v})$.
\end{lemma}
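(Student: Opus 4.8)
The plan is to compute both distances directly from the definition, using Lemma~\ref{fac-homogeneo} to pin down the first coordinates of the factorizations in $\mathbb N\mathcal A^{\mathsf{hom}}$. Write $\mathbf{u}=(u_1,\ldots,u_n)$ and $\mathbf{v}=(v_1,\ldots,v_n)$, so that $X^{(j_\mathbf{u},\mathbf{u})} = X_0^{j_\mathbf{u}} X_1^{u_1}\cdots X_n^{u_n}$ and similarly for $\mathbf{v}$, where $X_0$ is the variable attached to $\mathbf e_0$. By Lemma~\ref{fac-homogeneo} we have $j_\mathbf{u}=i-|\mathbf u|$ and $j_\mathbf{v}=i-|\mathbf v|$.

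First I would expand $\mathrm d\big((j_\mathbf{u},\mathbf{u}),(j_\mathbf{v},\mathbf{v})\big)$. Its length term is $\max\big(j_\mathbf{u}+|\mathbf u|,\ j_\mathbf{v}+|\mathbf v|\big) = \max(i,i)=i$, since both factorizations lie in $\mathsf Z\big((i,\mathbf a)\big)$ and $\mathbb N\mathcal A^{\mathsf{hom}}$ is half-factorial (alternatively this is immediate from the two displayed equalities). The gcd term is $\deg\big(\gcd(X_0^{j_\mathbf{u}}X^{\mathbf u},\, X_0^{j_\mathbf{v}}X^{\mathbf v})\big) = \min(j_\mathbf{u},j_\mathbf{v}) + \deg\big(\gcd(X^{\mathbf u},X^{\mathbf v})\big)$, because the $X_0$-exponent of the gcd is $\min(j_\mathbf{u},j_\mathbf{v})$ and the remaining variables are disjoint from $X_0$. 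Hence $\mathrm d\big((j_\mathbf{u},\mathbf{u}),(j_\mathbf{v},\mathbf{v})\big) = i - \min(j_\mathbf{u},j_\mathbf{v}) - \deg\big(\gcd(X^{\mathbf u},X^{\mathbf v})\big)$.

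Now substitute $j_\mathbf{u}=i-|\mathbf u|$, $j_\mathbf{v}=i-|\mathbf v|$, so $\min(j_\mathbf{u},j_\mathbf{v}) = i - \max(|\mathbf u|,|\mathbf v|)$. Plugging in, the two copies of $i$ cancel and we are left with $\mathrm d\big((j_\mathbf{u},\mathbf{u}),(j_\mathbf{v},\mathbf{v})\big) = \max(|\mathbf u|,|\mathbf v|) - \deg\big(\gcd(X^{\mathbf u},X^{\mathbf v})\big) = \max(\deg X^{\mathbf u},\deg X^{\mathbf v}) - \deg\big(\gcd(X^{\mathbf u},X^{\mathbf v})\big) = \mathrm d(\mathbf u,\mathbf v)$, which is exactly the claim.

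There is no real obstacle here; the statement is essentially bookkeeping. The one point that requires a moment's care is the identity $\deg\big(\gcd(X_0^{j_\mathbf{u}}X^{\mathbf u}, X_0^{j_\mathbf{v}}X^{\mathbf v})\big) = \min(j_\mathbf{u},j_\mathbf{v}) + \deg\big(\gcd(X^{\mathbf u},X^{\mathbf v})\big)$, which uses that the variable $X_0$ does not occur in $X^{\mathbf u}$ or $X^{\mathbf v}$, so that the gcd splits as a product over the $X_0$-part and the rest; this is where one invokes Lemma~\ref{fac-homogeneo} implicitly, since it guarantees the factorizations of $(i,\mathbf a)$ have exactly this shape. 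Everything else is the substitution $j_\mathbf{u}=i-|\mathbf u|$, $j_\mathbf{v}=i-|\mathbf v|$ and the elementary identity $\min(i-a,i-b)=i-\max(a,b)$.
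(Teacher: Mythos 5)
Your proof is correct and follows essentially the same route as the paper's: both compute the gcd of the lifted monomials as an $X_0$-power times $\gcd(X^{\mathbf u},X^{\mathbf v})$ and then reduce to arithmetic with $j_\mathbf{u}=i-|\mathbf u|$, $j_\mathbf{v}=i-|\mathbf v|$. The only cosmetic difference is that the paper assumes $|\mathbf v|\ge|\mathbf u|$ without loss of generality and cites Lemma~\ref{distancias-half-factorial} for the length term, whereas you keep the $\min/\max$ notation and verify the length term directly.
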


\begin{proof}
Notice that $i=|\mathbf{u}|+j_\mathbf{u}=|\mathbf{v}|+j_\mathbf{v}$. Assume without loss of generality that $|\mathbf v|\ge |\mathbf u|$. Set $X^\mathbf w=\gcd(X^\mathbf u, X^\mathbf v)$. Then $\gcd(X_0^{j_\mathbf u} X^\mathbf u, X_0^{j_\mathbf v} X^\mathbf v))=X_0^{j_\mathbf v} X^\mathbf w$, and $\mathrm d((j_\mathbf u,\mathbf u),(j_\mathbf v,\mathbf v)) \stackrel{\text{Lemma \ref{distancias-half-factorial}}}{=} i-(j_\mathbf v+|\mathbf w|) =|\mathbf v|-|\mathbf w|=\mathrm d(\mathbf u,\mathbf v)$.
\end{proof}

The \textbf{homogeneous catenary degree}, $\mathsf{c_{hom}}(\mathbf{a})$, of an element $\mathbf{a} \in \mathbb N \mathcal A$ is the least $N\in \mathbb N$ such that for any $\mathbf{u}, \mathbf{v} \in \mathsf Z(\mathbf a)$ there exists $N-$ chain from $\mathbf{u}$ to $\mathbf{v}$ in $\mathsf Z (\mathbf a) \cap \big\{ \mathbf w : \vert \mathbf w \vert \le \max\{ \vert \mathbf{u} \vert, \vert \mathbf{v} \vert \} \big\}$. If no $N \in \mathbb N$ do exist, we define $\mathsf{c_{hom}}(\mathbf{a}) = \infty.$

The homogeneous catenary degree of $\mathbb N \mathcal A$ is the supremum of all homogeneous catenary degrees of its elements. This definition was inspired by the following result.

\begin{proposition}\label{Prop chom-homc}
 $\mathsf {c_{hom}}(\mathbb N \mathcal A)=\mathsf c(\mathbb N \mathcal{A}^{\textsf{hom}})$.
\end{proposition}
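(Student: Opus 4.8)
The plan is to prove the equality $\mathsf{c_{hom}}(\mathbb N\mathcal A)=\mathsf c(\mathbb N\mathcal A^{\mathsf{hom}})$ by comparing chains in $\mathsf Z(\mathbf a)$ (with the length constraint) against chains in $\mathsf Z((i,\mathbf a))$ for a suitable $i$, using Lemma~\ref{fac-homogeneo} to identify the two factorization sets and Lemma~\ref{distancias} to see that distances are preserved under this identification. First I would fix $\mathbf a\in\mathbb N\mathcal A$ and two factorizations $\mathbf u,\mathbf v\in\mathsf Z(\mathbf a)$, and set $i=\max\{|\mathbf u|,|\mathbf v|\}$. By Lemma~\ref{fac-homogeneo}, the factorizations of $(i,\mathbf a)$ in $\mathbb N\mathcal A^{\mathsf{hom}}$ are exactly the pairs $(i-|\mathbf w|,\mathbf w)$ with $\mathbf w\in\mathsf Z(\mathbf a)$ and $|\mathbf w|\le i$; that is, the map $\mathbf w\mapsto (i-|\mathbf w|,\mathbf w)$ is a bijection between $\mathsf Z(\mathbf a)\cap\{\mathbf w:|\mathbf w|\le i\}$ and $\mathsf Z((i,\mathbf a))$. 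Moreover $(i,\mathbf a)\in\mathbb N\mathcal A^{\mathsf{hom}}$ precisely because $i\in\mathsf L(\mathbf a)\cup(\mathsf L(\mathbf a)+\mathbb N)$ after adjoining $\mathbf e_0$; here $\mathbf u,\mathbf v$ themselves give $|\mathbf u|,|\mathbf v|\le i$, so both lie in the domain. By Lemma~\ref{distancias} this bijection is an isometry, so an $N$-chain from $\mathbf u$ to $\mathbf v$ inside $\mathsf Z(\mathbf a)\cap\{|\mathbf w|\le i\}$ corresponds to an $N$-chain from $(i-|\mathbf u|,\mathbf u)$ to $(i-|\mathbf v|,\mathbf v)$ in $\mathsf Z((i,\mathbf a))$, and conversely.

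From this correspondence I would deduce both inequalities. For $\mathsf{c_{hom}}(\mathbb N\mathcal A)\le \mathsf c(\mathbb N\mathcal A^{\mathsf{hom}})$: given $\mathbf u,\mathbf v\in\mathsf Z(\mathbf a)$ with $i=\max\{|\mathbf u|,|\mathbf v|\}$, there is an $N$-chain in $\mathsf Z((i,\mathbf a))$ with $N=\mathsf c((i,\mathbf a))\le\mathsf c(\mathbb N\mathcal A^{\mathsf{hom}})$; pulling it back through the bijection gives a chain in $\mathsf Z(\mathbf a)$ all of whose members $\mathbf w$ satisfy $|\mathbf w|\le i=\max\{|\mathbf u|,|\mathbf v|\}$, which is exactly what the definition of $\mathsf{c_{hom}}(\mathbf a)$ requires. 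Hence $\mathsf{c_{hom}}(\mathbf a)\le\mathsf c(\mathbb N\mathcal A^{\mathsf{hom}})$ for every $\mathbf a$, and taking the supremum gives the inequality (and also shows $\mathsf{c_{hom}}(\mathbf a)$ is always finite). For the reverse inequality $\mathsf c(\mathbb N\mathcal A^{\mathsf{hom}})\le\mathsf{c_{hom}}(\mathbb N\mathcal A)$: take any element $(i,\mathbf a)\in\mathbb N\mathcal A^{\mathsf{hom}}$ and any two factorizations of it; by Lemma~\ref{fac-homogeneo} they are $(i-|\mathbf u|,\mathbf u)$ and $(i-|\mathbf v|,\mathbf v)$ for some $\mathbf u,\mathbf v\in\mathsf Z(\mathbf a)$ with $|\mathbf u|,|\mathbf v|\le i$. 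Applying the definition of $\mathsf{c_{hom}}(\mathbf a)$ with $N=\mathsf{c_{hom}}(\mathbf a)\le\mathsf{c_{hom}}(\mathbb N\mathcal A)$ yields an $N$-chain from $\mathbf u$ to $\mathbf v$ staying inside $\mathsf Z(\mathbf a)\cap\{|\mathbf w|\le\max\{|\mathbf u|,|\mathbf v|\}\}\subseteq\mathsf Z(\mathbf a)\cap\{|\mathbf w|\le i\}$, and pushing it forward through the bijection (which is well defined since all members have length $\le i$) gives an $N$-chain in $\mathsf Z((i,\mathbf a))$ between the two given factorizations.

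The one subtlety to handle carefully is the length bound $\max\{|\mathbf u|,|\mathbf v|\}$ versus the ambient $i$. In the forward direction I deliberately choose $i=\max\{|\mathbf u|,|\mathbf v|\}$ so that the $\mathbb N\mathcal A^{\mathsf{hom}}$-chain automatically respects the length constraint; in the backward direction the constraint in $\mathsf{c_{hom}}$ is $\max\{|\mathbf u|,|\mathbf v|\}$, which may be strictly smaller than $i$, but that only makes the chain stay in a smaller set, so it still lifts. I would also note the trivial cases (when $\#\mathsf Z(\mathbf a)\le 1$ or $\#\mathsf Z((i,\mathbf a))\le 1$ the catenary degree contributions are $0$) so that the suprema are genuinely controlled. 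I expect the main obstacle to be purely bookkeeping: making sure the bijection of Lemma~\ref{fac-homogeneo} is applied with the correct value of $i$ on each side and that "$N$-chain in $\mathsf Z(\mathbf a)$ with the length restriction" is matched verbatim with "$N$-chain in $\mathsf Z((i,\mathbf a))$", since Lemma~\ref{distancias} already supplies the isometry and there is no hard estimate to prove.

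\begin{proof}
Let $\omega=(1,0,\ldots,0)$, so that $\mathbb N\mathcal A^{\mathsf{hom}}$ is half-factorial with this $\omega$, and recall from Lemma~\ref{fac-homogeneo} that for $(i,\mathbf a)\in\mathbb N\mathcal A^{\mathsf{hom}}$ the map
$$
\varphi_i:\ \mathsf Z(\mathbf a)\cap\{\mathbf w:|\mathbf w|\le i\}\ \longrightarrow\ \mathsf Z\big((i,\mathbf a)\big),\qquad \mathbf w\longmapsto (i-|\mathbf w|,\mathbf w),
$$
is a bijection. By Lemma~\ref{distancias}, $\mathrm d\big(\varphi_i(\mathbf w),\varphi_i(\mathbf w')\big)=\mathrm d(\mathbf w,\mathbf w')$ for all $\mathbf w,\mathbf w'$ in the domain, so $\varphi_i$ carries $N$-chains to $N$-chains in both directions.

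We first show $\mathsf{c_{hom}}(\mathbf a)\le\mathsf c(\mathbb N\mathcal A^{\mathsf{hom}})$ for every $\mathbf a\in\mathbb N\mathcal A$. If $\#\mathsf Z(\mathbf a)\le 1$ this is clear. Otherwise pick $\mathbf u,\mathbf v\in\mathsf Z(\mathbf a)$ and set $i=\max\{|\mathbf u|,|\mathbf v|\}$. Then $i\in\mathsf L(\mathbf a)$ or $i>\min\mathsf L(\mathbf a)$, and in any case $(i,\mathbf a)\in\mathbb N\mathcal A^{\mathsf{hom}}$ with $\mathbf u,\mathbf v$ in the domain of $\varphi_i$. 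Let $N=\mathsf c\big((i,\mathbf a)\big)\le\mathsf c(\mathbb N\mathcal A^{\mathsf{hom}})$. There is an $N$-chain from $\varphi_i(\mathbf u)$ to $\varphi_i(\mathbf v)$ in $\mathsf Z((i,\mathbf a))$; applying $\varphi_i^{-1}$ produces an $N$-chain from $\mathbf u$ to $\mathbf v$ all of whose members $\mathbf w$ satisfy $|\mathbf w|\le i=\max\{|\mathbf u|,|\mathbf v|\}$, i.e.\ a chain in $\mathsf Z(\mathbf a)\cap\{\mathbf w:|\mathbf w|\le\max\{|\mathbf u|,|\mathbf v|\}\}$. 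Hence $\mathsf{c_{hom}}(\mathbf a)\le N\le\mathsf c(\mathbb N\mathcal A^{\mathsf{hom}})$. In particular $\mathsf{c_{hom}}(\mathbf a)<\infty$, and taking the supremum over $\mathbf a$ gives $\mathsf{c_{hom}}(\mathbb N\mathcal A)\le\mathsf c(\mathbb N\mathcal A^{\mathsf{hom}})$.

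Conversely, let $(i,\mathbf a)\in\mathbb N\mathcal A^{\mathsf{hom}}$ with $\#\mathsf Z((i,\mathbf a))\ge 2$, and let $(j_\mathbf u,\mathbf u),(j_\mathbf v,\mathbf v)\in\mathsf Z((i,\mathbf a))$. By Lemma~\ref{fac-homogeneo}, $\mathbf u,\mathbf v\in\mathsf Z(\mathbf a)$ with $|\mathbf u|,|\mathbf v|\le i$ and $(j_\mathbf u,\mathbf u)=\varphi_i(\mathbf u)$, $(j_\mathbf v,\mathbf v)=\varphi_i(\mathbf v)$. Put $N=\mathsf{c_{hom}}(\mathbf a)\le\mathsf{c_{hom}}(\mathbb N\mathcal A)$. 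By definition of $\mathsf{c_{hom}}(\mathbf a)$ there is an $N$-chain from $\mathbf u$ to $\mathbf v$ inside $\mathsf Z(\mathbf a)\cap\{\mathbf w:|\mathbf w|\le\max\{|\mathbf u|,|\mathbf v|\}\}\subseteq\mathsf Z(\mathbf a)\cap\{\mathbf w:|\mathbf w|\le i\}$, so $\varphi_i$ is defined on every member of the chain and maps it to an $N$-chain from $(j_\mathbf u,\mathbf u)$ to $(j_\mathbf v,\mathbf v)$ in $\mathsf Z((i,\mathbf a))$. Therefore $\mathsf c\big((i,\mathbf a)\big)\le N\le\mathsf{c_{hom}}(\mathbb N\mathcal A)$, and taking the maximum over $(i,\mathbf a)$ yields $\mathsf c(\mathbb N\mathcal A^{\mathsf{hom}})\le\mathsf{c_{hom}}(\mathbb N\mathcal A)$.
\end{proof}
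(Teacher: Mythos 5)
Your proof is correct and follows essentially the same route as the paper: both directions lift $\mathbf u,\mathbf v$ to factorizations of $(\max\{|\mathbf u|,|\mathbf v|\},\mathbf a)$ (respectively project factorizations of $(i,\mathbf a)$ down) and invoke Lemma~\ref{fac-homogeneo} and Lemma~\ref{distancias} to transfer chains. Packaging the correspondence as the explicit isometric bijection $\varphi_i$ is a minor presentational difference, and if anything makes the preservation of distances more visible than in the paper's forward direction.
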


\begin{proof}
Let $\mathbf{u}, \mathbf{v} \in \mathsf Z (\mathbf a),$ for some $\mathbf a \in \mathbb N \mathcal A.$ Assume without loss of generality that $j_\mathbf{u} = |\mathbf u|\le |\mathbf v|=j_\mathbf v$. Then $(j_\mathbf v - j_\mathbf u,\mathbf u), (0,\mathbf v)$ are factorizations of $(j_\mathbf v,\mathbf a)$. There exists a $\mathsf c(\mathbb N \mathcal{A}^{\textsf{hom}})$-chain  $(j_1,\mathbf{w}_1),\ldots, (j_t,\mathbf{w}_t)$ joining them. As $j_k=j_\mathbf{v}-|{\mathbf w_k}|$, we have that $|\mathbf{w}_k|\le |\mathbf v|$, and thus $\mathbf{w}_1,\ldots,\mathbf{w}_t$ is a $\mathsf c(\mathbb N \mathcal{A}^{\textsf{hom}})$-chain joining $\mathbf u$ and $\mathbf v$ with $|\mathbf{w}_k|\le \max\{|\mathbf u|,|\mathbf v|\}$. This proves $\mathsf {c_{hom}}(\mathbb N \mathcal A)\le\mathsf c(\mathbb N \mathcal{A}^{\textsf{hom}})$. 

Conversely, let $(j_\mathbf{u}, \mathbf{u}), (j_\mathbf{v}, \mathbf{v})$ be factorizations of $(i,\mathbf a)\in \mathbb N \mathcal{A}^{\textsf{hom}}$. In view of Lemma \ref{fac-homogeneo}, $j_\mathbf{u}+|\mathbf u|=j_\mathbf{v}+|\mathbf v|=i$. Assume without loss of generality that $|\mathbf u|\le |\mathbf v|$. Let $\mathbf{w}_1, \ldots, \mathbf{w}_t$ be a $\mathsf {c_{hom}}( \mathbb N \mathcal{A})$-chain from $\mathbf{u}$ to $\mathbf{v}.$ By definition, $|\mathbf{w}_k|\le |\mathbf v|\le i$. Set $j_k=i-|\mathbf w_k|$. Then $(j_1,\mathbf{w}_1),\ldots, (j_t,\mathbf{w}_t)$ is a $\mathsf {c_{hom}}( \mathbb N \mathcal{A})$-chain joining $(j_\mathbf{u}, \mathbf{u}), (j_\mathbf{v}, \mathbf{v})$. Thus, $\mathsf c(\mathbb N \mathcal{A}^{\textsf{hom}}) \le \mathsf {c_{hom}}(\mathbb N \mathcal A),$ and this completes the proof.
\end{proof}

As consequence of Corollary \ref{cat-total degree}, we obtain the following.

\begin{corollary}\label{cat-betti-hom}
The homogeneous catenary degree of $\mathbb{N} \mathcal{A}$ is the maximum of the total degrees of a minimal system of binomial generators of $I_{\mathcal{A}^{\textsf{hom}}}.$
\end{corollary}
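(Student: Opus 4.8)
The plan is to combine two facts already established in the excerpt: Corollary \ref{cat-total degree}, which identifies the catenary degree of any half-factorial monoid with the maximal total degree of a minimal system of binomial generators of its toric ideal; and Proposition \ref{Prop chom-homc}, which identifies $\mathsf{c_{hom}}(\mathbb N\mathcal A)$ with $\mathsf c(\mathbb N\mathcal A^{\textsf{hom}})$. The only genuinely new input needed is that $\mathbb N\mathcal A^{\textsf{hom}}$ is itself half-factorial, which was observed right after the definition of $\mathcal A^{\textsf{hom}}$ (take $\omega=(1,0,\ldots,0)$, since every generator $\mathbf e_0$ and $(1,\mathbf a_i)$ has first coordinate equal to $1$, so dotting with $\omega$ gives $1$).

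First I would invoke half-factoriality of $\mathbb N\mathcal A^{\textsf{hom}}$ so that Corollary \ref{cat-total degree} applies to it: $\mathsf c(\mathbb N\mathcal A^{\textsf{hom}})$ equals the maximum of the total degrees of a minimal system of binomial generators of $I_{\mathcal A^{\textsf{hom}}}$. Then I would apply Proposition \ref{Prop chom-homc} to rewrite the left-hand side, obtaining $\mathsf{c_{hom}}(\mathbb N\mathcal A)=\mathsf c(\mathbb N\mathcal A^{\textsf{hom}})=\max\{\deg(f):f\in G\}$, where $G$ is any minimal binomial generating set of $I_{\mathcal A^{\textsf{hom}}}$. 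Since this holds for every minimal generating system (the set of total degrees occurring among minimal generators is a monoid invariant, as is clear from the Betti-degree description), the statement follows directly.

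There is really no obstacle here: the corollary is a one-line consequence of results already in hand, and the body of the proof should just be the two-step chain of equalities above. The one point worth stating explicitly is that $\mathbb N\mathcal A^{\textsf{hom}}$ is reduced — this is needed because the running hypothesis of the whole paper (see the Remark after the definition of $I_\mathcal A$) is that the monoid be reduced, and Corollary \ref{cat-total degree} is proved under that assumption. This is immediate: if $(i,\mathbf a)$ and $-(i,\mathbf a)$ both lie in $\mathbb N\mathcal A^{\textsf{hom}}$, then taking first coordinates forces $i=0$ and hence $\mathbf a=0$, because all generators have strictly positive first coordinate. Hence the proof reads: since $\mathbb N\mathcal A^{\textsf{hom}}$ is a reduced half-factorial monoid, Corollary \ref{cat-total degree} gives that $\mathsf c(\mathbb N\mathcal A^{\textsf{hom}})$ is the maximum of the total degrees of a minimal binomial generating system of $I_{\mathcal A^{\textsf{hom}}}$; by Proposition \ref{Prop chom-homc}, this number is $\mathsf{c_{hom}}(\mathbb N\mathcal A)$, which is what we wanted.
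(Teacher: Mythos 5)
Your proof is correct and matches the paper's approach exactly: the paper derives this corollary as an immediate consequence of Corollary \ref{cat-total degree} applied to the half-factorial monoid $\mathbb N\mathcal A^{\textsf{hom}}$, combined with Proposition \ref{Prop chom-homc}. Your extra check that $\mathbb N\mathcal A^{\textsf{hom}}$ is reduced is a sensible (if routine) addition that the paper leaves implicit.
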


We prove that our new catenary degree is an upper bound for the usual catenary degree.

\begin{proposition}\label{cadenas-homogeneo}
$\mathsf c(\mathbb N \mathcal{A})\le \mathsf {c_{hom}}(\mathbb N \mathcal{A})$.
\end{proposition}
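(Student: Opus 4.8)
The key observation is that the notion of homogeneous $N$-chain is strictly more demanding than that of ordinary $N$-chain: an $N$-chain realizing $\mathsf{c_{hom}}(\mathbf a)$ lies inside $\mathsf Z(\mathbf a)\cap\{\mathbf w: |\mathbf w|\le \max\{|\mathbf u|,|\mathbf v|\}\}\subseteq \mathsf Z(\mathbf a)$, so it is in particular an ordinary $N$-chain from $\mathbf u$ to $\mathbf v$. Hence for every $\mathbf a\in\mathbb N\mathcal A$ one gets $\mathsf c(\mathbf a)\le \mathsf{c_{hom}}(\mathbf a)$, and taking maxima (resp. suprema) over $\mathbf a$ yields the claim. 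So the whole proof reduces to this inclusion-of-chains remark together with a bookkeeping step for the suprema.

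Concretely, I would first record that $\mathsf{c_{hom}}(\mathbb N\mathcal A)$ is finite, so that the statement is not vacuous. By Proposition \ref{Prop chom-homc} it equals $\mathsf c(\mathbb N\mathcal A^{\textsf{hom}})$, and $\mathbb N\mathcal A^{\textsf{hom}}$ is again an affine semigroup, whose catenary degree is finite since it is attained at one of its finitely many Betti elements (from the proof of \cite[Theorem 3.1]{Chapman}); alternatively one may invoke Corollary \ref{cat-betti-hom}. Write $N=\mathsf{c_{hom}}(\mathbb N\mathcal A)$.

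Then I would fix $\mathbf a\in\mathbb N\mathcal A$ and $\mathbf u,\mathbf v\in\mathsf Z(\mathbf a)$. Since $N\ge \mathsf{c_{hom}}(\mathbf a)$, there is an $N$-chain $\mathbf u=\mathbf w_1,\ldots,\mathbf w_t=\mathbf v$ with every $\mathbf w_k\in\mathsf Z(\mathbf a)$, with $|\mathbf w_k|\le \max\{|\mathbf u|,|\mathbf v|\}$, and with $\mathrm d(\mathbf w_k,\mathbf w_{k+1})\le N$. Forgetting the length restriction, this is an ordinary $N$-chain in $\mathsf Z(\mathbf a)$, whence $\mathsf c(\mathbf a)\le N$. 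As $\mathbf a$ was arbitrary, $\mathsf c(\mathbb N\mathcal A)=\max_{\mathbf a}\mathsf c(\mathbf a)\le N=\mathsf{c_{hom}}(\mathbb N\mathcal A)$.

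I do not anticipate a genuine obstacle here: the content is entirely the observation that imposing the bound $|\mathbf w_k|\le\max\{|\mathbf u|,|\mathbf v|\}$ only shrinks the set of admissible chains. The one subtlety worth stating explicitly is the finiteness of $\mathsf{c_{hom}}(\mathbb N\mathcal A)$: each individual $\mathsf{c_{hom}}(\mathbf a)$ is trivially finite (the one-step chain $\mathbf u,\mathbf v$ is already homogeneous), but uniform finiteness over all $\mathbf a$ is exactly what Proposition \ref{Prop chom-homc} provides, which is why $\mathbb N\mathcal A^{\textsf{hom}}$ should be brought in at the outset.
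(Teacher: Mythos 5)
Your proof is correct, but it takes a genuinely different and more elementary route than the paper's. The paper argues through the auxiliary monoid $\mathbb N\mathcal A^{\textsf{hom}}$: it lifts $\mathbf u,\mathbf v\in\mathsf Z(\mathbf a)$ to the factorizations $(j_\mathbf v-j_\mathbf u,\mathbf u)$ and $(0,\mathbf v)$ of $(j_\mathbf v,\mathbf a)$, extracts a $\mathsf c(\mathbb N\mathcal A^{\textsf{hom}})$-chain there, and projects it back down using Lemma \ref{distancias} (distance preservation), so that the inequality is really obtained in the form $\mathsf c(\mathbb N\mathcal A)\le \mathsf c(\mathbb N\mathcal A^{\textsf{hom}})$ and then combined with Proposition \ref{Prop chom-homc}; in effect it re-runs the first half of the proof of that proposition. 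You instead argue directly from the definition of $\mathsf{c_{hom}}(\mathbf a)$: a chain inside $\mathsf Z(\mathbf a)\cap\{\mathbf w:|\mathbf w|\le\max\{|\mathbf u|,|\mathbf v|\}\}$ is in particular an ordinary chain in $\mathsf Z(\mathbf a)$, so $\mathsf c(\mathbf a)\le\mathsf{c_{hom}}(\mathbf a)$ element by element, and the global inequality follows by taking suprema. This avoids $\mathbb N\mathcal A^{\textsf{hom}}$ and Lemma \ref{distancias} entirely and makes transparent that the content is just ``removing a constraint enlarges the set of admissible chains.'' Your preliminary discussion of finiteness is harmless but unnecessary: the inequality $\mathsf c(\mathbb N\mathcal A)\le\mathsf{c_{hom}}(\mathbb N\mathcal A)$ holds trivially if the right-hand side is infinite, so no appeal to Proposition \ref{Prop chom-homc} or to Betti elements is needed to make the statement non-vacuous.
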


\begin{proof}
Let $\mathbf a \in \mathbb N \mathcal{A},$ and let $\mathbf{u}, \mathbf{v} \in \mathsf Z (\mathbf a)$ with $| \mathbf{u} | \leq | \mathbf{v} |$. We show that there exists a $\mathsf c(\mathbb N \mathcal{A}^{\textsf{hom}})$-chain joining $\mathbf{u}$ and $\mathbf{v}$. Set $j_\mathbf{u} = | \mathbf{u} | \leq | \mathbf{v} | =j_\mathbf{v}$. Then $(j_\mathbf{v} - j_\mathbf{u}, \mathbf{u})$ and $(0,\mathbf{v}) \in \mathsf Z (j_\mathbf{v}, \mathbf{a})$. From the definition of homogeneous catenary degree, there exists a $\mathsf c(\mathbb N \mathcal{A}^{\textsf{hom}})-$chain $(j_1, \mathbf{w}_1), \ldots, (j_t, \mathbf{w}_t)$ of factorizations of  $j_\mathbf{v}, \mathbf{a}$ from $(j_\mathbf{v} - j_\mathbf{u}, \mathbf{u})$ to $(0,\mathbf{v})$, and $\mathsf d((j_k,\mathbf{w}_k),(j_{k+1},\mathbf{w}_{k+1}))\le \mathsf c(\mathbb N \mathcal{A}^{\textsf{hom}})$ From Lemma \ref{distancias}, $\mathsf d((j_k,\mathbf{w}_k),(j_{k+1},\mathbf{w}_{k+1}))=\mathsf d(\mathbf{w}_k,\mathbf{w}_{k+1})$, whence $\mathbf{w}_1,\ldots,\mathbf{w}_t$ is a $\mathsf c(\mathbb N \mathcal{A}^{\textsf{hom}})$-chain joining $\mathbf{u}$ and $\mathbf{v}$.
\end{proof}

The catenary degree might be strictly smaller than the homogeneous catenary degree.

\begin{example}
Let $\mathcal{A} = \{10,11,14,19\}$. One can check that $\mathsf c(\mathbb N \mathcal A) = 4.$ Since a minimal system of binomial generators of $I_{\mathcal{A}^{\mathsf{hom}}} \subseteq \mathbbmss{k}[X_0, \ldots, X_4]$ is $\{X_2X_3^2-X_1^2X_4, X_1X_3^2-X_0X_4^2, X_2^3-X_0X_3X_4, X_1^3-X_0X_2X_4, X_1^2X_2^2-X_0X_3^3, X_3^5-X_1X_2^2X_4^2\}$, we may conclude, by Corollary \ref{cat-betti-hom}, that $\mathsf {c_{hom}}(\mathbb N \mathcal{A}) = 5.$
\end{example}

We now compare our new catenary degree with the widely studied monotone catenary degree. Recall that the \textbf{monotone catenary degree}, $\mathsf{c_{hom}}(\mathbf{a})$, of an element $\mathbf{a} \in \mathbb N \mathcal A$ is the least $N\in \mathbb N$ such that for any two factorizations $\mathbf{u}$ and $\mathbf{v}$ of $\mathbf{a}$ with $\vert \mathbf{u} \vert \leq \vert \mathbf{v} \vert$ there is an $N-$chain $\mathbf{u} = \mathbf{u}_0, \ldots, \mathbf{u}_k = \mathbf{v}$ with $\vert \mathbf{u}_0 \vert \le \cdots \le \vert \mathbf{u}_k \vert$.

\begin{proposition}
 $\mathsf {c_{hom}}(\mathbb N \mathcal{A}) \le \mathsf{c_{mon}}(\mathbb N \mathcal{A})$.
\end{proposition}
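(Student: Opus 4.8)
The plan is to show that any monotone chain is in particular a chain witnessing the homogeneous catenary degree, so that the bound $N = \mathsf{c_{mon}}(\mathbb N \mathcal A)$ already works for $\mathsf{c_{hom}}$.

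First I would fix $\mathbf a \in \mathbb N \mathcal A$ and two factorizations $\mathbf u, \mathbf v \in \mathsf Z(\mathbf a)$ with $|\mathbf u| \le |\mathbf v|$. By definition of the monotone catenary degree there is an $N$-chain $\mathbf u = \mathbf u_0, \ldots, \mathbf u_k = \mathbf v$ with $N = \mathsf{c_{mon}}(\mathbf a)$ and $|\mathbf u_0| \le |\mathbf u_1| \le \cdots \le |\mathbf u_k|$. The key observation is that monotonicity of the lengths forces $|\mathbf u_i| \le |\mathbf u_k| = |\mathbf v| = \max\{|\mathbf u|, |\mathbf v|\}$ for every $i$. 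Hence every $\mathbf u_i$ lies in $\mathsf Z(\mathbf a) \cap \{\mathbf w : |\mathbf w| \le \max\{|\mathbf u|, |\mathbf v|\}\}$, so the chain is admissible in the definition of the homogeneous catenary degree, and therefore $\mathsf{c_{hom}}(\mathbf a) \le \mathsf{c_{mon}}(\mathbf a)$.

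Taking suprema over all $\mathbf a \in \mathbb N \mathcal A$ then yields $\mathsf{c_{hom}}(\mathbb N \mathcal A) \le \mathsf{c_{mon}}(\mathbb N \mathcal A)$, which is the claim. There is essentially no obstacle here: the only thing to be slightly careful about is the case $\#\mathsf Z(\mathbf a) \le 1$, where both degrees are $0$ by convention, and the bookkeeping that the homogeneous catenary degree of the monoid is defined as a supremum (which is consistent with the per-element inequality just proved).

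\begin{proof}
Let $\mathbf a \in \mathbb N \mathcal A$ and let $\mathbf u, \mathbf v \in \mathsf Z(\mathbf a)$ with $|\mathbf u| \le |\mathbf v|$. Set $N = \mathsf{c_{mon}}(\mathbf a)$. By definition of the monotone catenary degree, there is an $N$-chain $\mathbf u = \mathbf u_0, \ldots, \mathbf u_k = \mathbf v$ with $|\mathbf u_0| \le |\mathbf u_1| \le \cdots \le |\mathbf u_k|$. In particular $|\mathbf u_i| \le |\mathbf u_k| = |\mathbf v| = \max\{|\mathbf u|, |\mathbf v|\}$ for all $i$, so $\mathbf u_i \in \mathsf Z(\mathbf a) \cap \{\mathbf w : |\mathbf w| \le \max\{|\mathbf u|, |\mathbf v|\}\}$ for every $i$. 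Hence $\mathbf u_0, \ldots, \mathbf u_k$ is an $N$-chain of the kind required in the definition of $\mathsf{c_{hom}}(\mathbf a)$, which gives $\mathsf{c_{hom}}(\mathbf a) \le N = \mathsf{c_{mon}}(\mathbf a)$. Taking the supremum over all $\mathbf a \in \mathbb N \mathcal A$ yields $\mathsf{c_{hom}}(\mathbb N \mathcal A) \le \mathsf{c_{mon}}(\mathbb N \mathcal A)$.
\end{proof}
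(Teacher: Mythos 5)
Your proof is correct, and it is more direct than the one in the paper. You work straight from the intrinsic definitions: a monotone chain from $\mathbf u$ to $\mathbf v$ (with $|\mathbf u|\le|\mathbf v|$) has non-decreasing lengths, hence all its terms lie in $\mathsf Z(\mathbf a)\cap\{\mathbf w : |\mathbf w|\le\max\{|\mathbf u|,|\mathbf v|\}\}$, so it is already an admissible chain for $\mathsf{c_{hom}}(\mathbf a)$; taking suprema finishes the argument. The paper instead phrases everything in the auxiliary half-factorial monoid $\mathbb N\mathcal A^{\mathsf{hom}}$: it lifts the monotone chain $\mathbf w_1,\ldots,\mathbf w_t$ to the chain $(j_1,\mathbf w_1),\ldots,(j_t,\mathbf w_t)$ with $j_k=i-|\mathbf w_k|$ (monotonicity is exactly what guarantees $j_k\ge 0$), concludes $\mathsf c(\mathbb N\mathcal A^{\mathsf{hom}})\le\mathsf{c_{mon}}(\mathbb N\mathcal A)$, and then invokes Proposition~\ref{Prop chom-homc} to identify $\mathsf c(\mathbb N\mathcal A^{\mathsf{hom}})$ with $\mathsf{c_{hom}}(\mathbb N\mathcal A)$. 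The underlying observation is the same in both arguments; what your version buys is independence from Proposition~\ref{Prop chom-homc} and from the construction of $\mathcal A^{\mathsf{hom}}$ altogether, while the paper's version keeps all comparisons of catenary-type invariants uniformly expressed as statements about the ordinary catenary degree of auxiliary monoids, which is the organizing principle of that section (and what makes the invariants computable via binomial ideals).
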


\begin{proof}
Let $(i, \mathbf{a}) \in \mathbb N \mathcal{A}^{\textsf{hom}}$, and let $(j_\mathbf{u}, \mathbf{u}),\ (j_\mathbf{v},\mathbf{v}) \in \mathsf Z ((i, \mathbf{a}))$. Assume for instance that $i-j_\mathbf{u}=|\mathbf{u}| \le |\mathbf{v}| = i - j_\mathbf{v}$. From the definition of $\mathsf{c_{mon}}(N \mathcal{A})$, there exist $\mathbf{w}_1, \ldots, \mathbf{w}_t \in \mathsf Z (\mathbf a)$ with $\mathbf{w}_1=\mathbf{u}$, $\mathbf{w}_t=\mathbf{v}$, $\mathrm d(\mathbf{w}_k,\mathbf{w}_{k+1})\le \mathsf{c_{mon}}(\mathbb N \mathcal A)$ and $|\mathbf{w}_k|\le |\mathbf{w}_{k+1}|$. Set $j_k=i-|\mathbf{w}_k|$. Then $(j_1,\mathbf{w}_1), \ldots, (j_t,\mathbf{w}_t)$ is a $\mathsf{c_{mon}}(\mathbb N \mathcal A)$-chain joining $(j_\mathbf{u},\mathbf{u})$ and $(j_\mathbf{v},\mathbf{v})$. Thus $\mathsf c(\mathbb N \mathcal{A}^{\textsf{hom}})\le \mathsf{c_{mon}}(\mathbb N \mathcal A)$.
\end{proof}

In some cases the homogeneous catenary degree is sharper than the monotone catenary degree.

\begin{example}
Let $\mathcal{A} = \{11,19,32\}.$ Then $\mathsf c (\mathbb N \mathcal{A}) =  \mathsf {c_{hom}}(\mathbb N \mathcal{A}) = 11 < \mathsf{c_{eq}}(\mathbb N \mathcal{A}) = \mathsf{c_{mon}}(\mathbb N \mathcal{A}) = 21$.
\end{example}

In spite of the above example, the equal catenary degree may be smaller than the homogeneous catenary degree.

\begin{example}
For $\mathcal{A} = \{11,19,23\}$, $\mathsf c(\mathbb N\mathcal A)=\mathsf{c_{eq}}(\mathbb N \mathcal{A}) = 3 < \mathsf {c_{hom}}(\mathbb N \mathcal{A}) =\mathsf{ c_{mon}} (\mathbb N\mathcal A)= 9$.
\end{example}

\section{Other invariants}

Let as above $\mathcal A= \{\mathbf{a}_1, \ldots, \mathbf{a}_n\} \subseteq \mathbb{Z}^d$. We assume now that 
$\mathcal A$ is a minimal system of generators of $\mathbb N \mathcal A$.

Another invariant related with distances of factorizations is the tame degree: the \textbf{tame degree} of $\mathbf a \in \mathbb N\mathcal A$, $\mathsf t(\mathbf a)$, is the minimum of all $N\in \mathbb N$ such that for all $\mathbf{u}\in {\mathsf Z}(\mathbf{a})$ and every minimal generator $\mathbf{a}_i$ such that $\mathbf{a}-\mathbf{a}_i\in \mathbb N\mathcal A$, there exists $\mathbf{u}' =(u_1',\ldots, u_n') \in {\mathsf Z}(\mathbf{a})$ such that $u'_i\neq 0$ and $\mathsf{d}(\mathbf{u},\mathbf{u}')\leq N$. The tame degree of $\mathbb N\mathcal A$, $\mathsf t(\mathbb N\mathcal A)$, is the supremum of all tame degrees of its elements.

\begin{proposition}
 $\mathsf t(\mathbb N \mathcal{A})\le \mathsf t(\mathbb N \mathcal{A}^{\textsf{hom}})$.
\end{proposition}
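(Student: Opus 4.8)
The plan is to mimic the strategy used for the catenary-degree inequalities in Propositions~\ref{cadenas-homogeneo} and the monotone comparison: transfer a factorization and an atom in $\mathbb{N}\mathcal{A}$ up to $\mathbb{N}\mathcal{A}^{\textsf{hom}}$, apply the definition of $\mathsf t(\mathbb{N}\mathcal{A}^{\textsf{hom}})$ there, and push the resulting factorization back down, using Lemma~\ref{distancias} to see that the distance is unchanged. First I would fix $\mathbf a\in\mathbb{N}\mathcal A$, a factorization $\mathbf u\in\mathsf Z(\mathbf a)$, and an atom $\mathbf a_i$ with $\mathbf a-\mathbf a_i\in\mathbb{N}\mathcal A$; the goal is to produce $\mathbf u'\in\mathsf Z(\mathbf a)$ with $u_i'\neq 0$ and $\mathrm d(\mathbf u,\mathbf u')\le \mathsf t(\mathbb{N}\mathcal{A}^{\textsf{hom}})$. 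Lift $\mathbf u$ to the factorization $(i_0,\mathbf u):=(|\mathbf a\cdot\omega|-|\mathbf u|\text{-shift})$ — more precisely, set $j_{\mathbf u}=i-|\mathbf u|$ where $i$ is chosen so that $(i,\mathbf a)\in\mathbb{N}\mathcal{A}^{\textsf{hom}}$, e.g. take $i$ large enough (any $i\ge |\mathbf u|$ works since $(i,\mathbf a)\in\mathbb{N}\mathcal A^{\textsf{hom}}$ for all such $i$ by Lemma~\ref{fac-homogeneo}). Then $(j_{\mathbf u},\mathbf u)\in\mathsf Z\big((i,\mathbf a)\big)$.

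Next I would observe that $(1,\mathbf a_i)$ is an atom of $\mathbb{N}\mathcal{A}^{\textsf{hom}}$ and that $(i,\mathbf a)-(1,\mathbf a_i)=(i-1,\mathbf a-\mathbf a_i)\in\mathbb{N}\mathcal{A}^{\textsf{hom}}$, again by Lemma~\ref{fac-homogeneo}, because $\mathbf a-\mathbf a_i\in\mathbb{N}\mathcal A$ and, choosing $i$ suitably large, $i-1$ exceeds some element of $\mathsf L(\mathbf a-\mathbf a_i)$ — in fact since $\mathbb{N}\mathcal{A}^{\textsf{hom}}$ contains $\mathbf e_0$, once $(i',\mathbf a-\mathbf a_i)$ is a valid element for one $i'$ it is valid for every $i'\ge$ that value. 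So the definition of $\mathsf t(\mathbb{N}\mathcal{A}^{\textsf{hom}})$ yields a factorization of $(i,\mathbf a)$ of the form $(j',\mathbf u')$ in which the coordinate corresponding to $(1,\mathbf a_i)$ is nonzero, and $\mathrm d\big((j_{\mathbf u},\mathbf u),(j',\mathbf u')\big)\le\mathsf t(\mathbb{N}\mathcal{A}^{\textsf{hom}})$. By Lemma~\ref{fac-homogeneo} this $(j',\mathbf u')$ has $\mathbf u'\in\mathsf Z(\mathbf a)$ with $u_i'\neq 0$, and by Lemma~\ref{distancias} $\mathrm d(\mathbf u,\mathbf u')=\mathrm d\big((j_{\mathbf u},\mathbf u),(j',\mathbf u')\big)\le\mathsf t(\mathbb{N}\mathcal{A}^{\textsf{hom}})$. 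Taking suprema over $\mathbf a$, $\mathbf u$, and $i$ gives $\mathsf t(\mathbb{N}\mathcal A)\le\mathsf t(\mathbb{N}\mathcal{A}^{\textsf{hom}})$.

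The step I expect to require the most care is the bookkeeping around the extra coordinate: one must be sure that the atom $\mathbf a_i$ of $\mathbb{N}\mathcal A$ really corresponds to an atom $(1,\mathbf a_i)$ of $\mathbb{N}\mathcal A^{\textsf{hom}}$ (this uses that $\mathcal A$ is a minimal generating system, an assumption in force in this section, together with the structure of $\mathcal A^{\textsf{hom}}$; note $\mathbf e_0$ is also an atom but it is irrelevant here), and that the "divisibility" condition $\mathbf a-\mathbf a_i\in\mathbb{N}\mathcal A$ lifts correctly to $(i,\mathbf a)-(1,\mathbf a_i)\in\mathbb{N}\mathcal A^{\textsf{hom}}$ for an appropriate choice of $i$ — here one should pick $i$ once and for all, say $i=|\mathbf u|+|\mathbf v'|$ for a fixed factorization $\mathbf v'$ of $\mathbf a-\mathbf a_i$ plus $1$, or simply take $i$ as large as needed, exploiting that $\mathbf e_0\in\mathcal A^{\textsf{hom}}$ makes the set of admissible first coordinates upward closed. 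Everything else is a direct application of Lemmas~\ref{fac-homogeneo} and~\ref{distancias}, exactly parallel to the proof of Proposition~\ref{cadenas-homogeneo}.
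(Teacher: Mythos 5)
Your proposal is correct and follows essentially the same route as the paper: lift $\mathbf u$ and the atom $\mathbf a_i$ to $\mathbb N\mathcal A^{\textsf{hom}}$, apply the tame degree there to the generator $(1,\mathbf a_i)$, and descend via Lemmas~\ref{fac-homogeneo} and~\ref{distancias}. The only (harmless) difference is that you take the lifted first coordinate ``large enough'', whereas the paper fixes it as $j=\max\mathsf L(\mathbf a)$ and verifies explicitly that $j-1\ge\max\mathsf L(\mathbf a-\mathbf a_i)$; just beware the clash of the letter $i$, which you use both for the atom index and for that first coordinate.
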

\begin{proof}
 Let $\mathbf a\in \mathbb N \mathcal{A}$ and $i\in\{1,\ldots,n\}$ be such that $\mathbf a'=\mathbf a-\mathbf a_i\in \mathbb N \mathcal{A}$. Assume that there exists $\mathbf u=(u_1,\ldots,u_n)\in \mathsf Z (\mathbf a)$ with $u_i=0$. Let $j=\max \mathsf L(\mathbf a)$, $j'=\max \mathsf L(\mathbf a')$, and $l_\mathbf u=j-|\mathbf u|$. Let $\mathbf v\in \mathsf Z(\mathbf \mathbf a')$ be such that $|\mathbf v|=j'$. As $\mathbf v+\mathbf e_i\in \mathsf Z(\mathbf a)$, we deduce that $j'+1\le j$. Then $(j,\mathbf a)$ and $(j-1,\mathbf a-\mathbf a_i)=(j,\mathbf a)-(1,\mathbf a_i)\in \mathbb N \mathcal{A}^{\textsf{hom}}$, and $(l_\mathbf u,\mathbf u)\in \mathsf Z \big( (j,\mathbf a) \big)$. So by definition of $\mathsf t(\mathbb N \mathcal{A}^{\textsf{hom}})$, there exists $(l_{\mathbf w},\mathbf w)\in \mathsf Z \big( (j,\mathbf  a) \big)$ with $\mathbf w \cdot \mathbf e_i\neq 0$ and $\mathrm d\big( (l_\mathbf u,\mathbf u),(l_\mathbf w,\mathbf w) \big)\le \mathsf t(\mathbb N \mathcal{A}^{\textsf{hom}})$. From Lemma \ref{distancias}, we deduce that $\mathrm d(\mathbf u,\mathbf w)\le \mathsf t(\mathbb N \mathcal{A}^{\textsf{hom}})$. This proves that $\mathsf t(\mathbb N \mathcal{A})\le \mathsf t(\mathbb N \mathcal{A}^{\textsf{hom}})$. 
\end{proof}

\begin{notation}
Given $\mathbf a, \mathbf a'$ in $\mathbb Z^d$, we write $\mathbf a \preceq \mathbf a'$ if $\mathbf a'-\mathbf a\in \mathbb N\mathcal A$, and given $\mathbf u, \mathbf u'$ in $\mathbb Z^n,$ we write $\mathbf{u} \le \mathbf{u}'$ if $\mathbf{u}' - \mathbf{u} \in \mathbb N^n$.
\end{notation}

\begin{lemma}\label{minimales-soportes}
Let $\mathbf u \in \pi^{-1}(\mathbf a_i +\mathbb N \mathcal{A}) \setminus\{\mathbf e_i\}$ be minimal (with respect to $\le$) in $\pi^{-1}(\mathbf a_i +\mathbb N \mathcal{A})$, for some $i\in \{1,\ldots,n\}$, and let $\mathbf a=\pi(\mathbf u)$. Then $\mathbf u\cdot \mathbf v=0$ for all $\mathbf v=(v_1,\ldots,v_n)\in \mathsf Z(\mathbf a)$ such that $v_i\neq 0$.
\end{lemma}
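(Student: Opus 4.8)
The plan is to argue by contradiction: suppose $\mathbf u\cdot\mathbf v\neq 0$ for some $\mathbf v=(v_1,\ldots,v_n)\in\mathsf Z(\mathbf a)$ with $v_i\neq 0$, and derive a violation of the minimality of $\mathbf u$ in $\pi^{-1}(\mathbf a_i+\mathbb N\mathcal A)$. Since $\mathbf u\cdot\mathbf v\neq 0$, there is an index $j$ with $u_j>0$ and $v_j>0$, so $X^{\mathbf u}$ and $X^{\mathbf v}$ share the variable $X_j$; equivalently $\mathbf u-\mathbf e_j\ge 0$ and $\mathbf v-\mathbf e_j\ge 0$. The key observation is that $\mathbf u-\mathbf e_j$ should again lie in $\pi^{-1}(\mathbf a_i+\mathbb N\mathcal A)$, which (being strictly below $\mathbf u$) contradicts minimality unless $\mathbf u-\mathbf e_j=\mathbf e_i$ — and I will have to rule that boundary case out too.

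First I would record what membership in $\pi^{-1}(\mathbf a_i+\mathbb N\mathcal A)$ means: $\mathbf w\in\pi^{-1}(\mathbf a_i+\mathbb N\mathcal A)$ exactly when $\pi(\mathbf w)\in\mathbf a_i+\mathbb N\mathcal A$, i.e. $\pi(\mathbf w)-\mathbf a_i\in\mathbb N\mathcal A$, i.e. $\pi(\mathbf w)\succeq\mathbf a_i$. So the set $\pi^{-1}(\mathbf a_i+\mathbb N\mathcal A)$ is upward closed under $\le$ in a weak sense but, more to the point, the minimality hypothesis says $\mathbf u$ is a minimal element of this set. Now from $v_i\neq 0$ we get $\mathbf v\ge\mathbf e_i$, hence $\mathbf a=\pi(\mathbf v)=\pi(\mathbf v-\mathbf e_i)+\mathbf a_i\succeq\mathbf a_i$; and more importantly, for any $j$ with $u_j,v_j>0$, consider $\mathbf u'=\mathbf u-\mathbf e_j$. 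Then $\pi(\mathbf u')=\mathbf a-\mathbf a_j$. I claim $\mathbf a-\mathbf a_j\succeq\mathbf a_i$: indeed $\mathbf v-\mathbf e_j\in\mathbb N^n$ and, if also $\mathbf v-\mathbf e_j-\mathbf e_i\in\mathbb N^n$ (which holds when $j\neq i$, or when $j=i$ and $v_i\ge 2$), then $\mathbf a-\mathbf a_j=\pi(\mathbf v-\mathbf e_j)=\pi(\mathbf v-\mathbf e_j-\mathbf e_i)+\mathbf a_i\succeq\mathbf a_i$, so $\mathbf u'\in\pi^{-1}(\mathbf a_i+\mathbb N\mathcal A)$ with $\mathbf u'<\mathbf u$, contradicting minimality of $\mathbf u$ — unless $\mathbf u'=\mathbf e_i$, i.e. $\mathbf u=\mathbf e_i+\mathbf e_j$.

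The remaining cases to dispatch are the boundary ones. If $\mathbf u=\mathbf e_i+\mathbf e_j$ with $j\neq i$: here $\mathbf u\cdot\mathbf v\neq 0$ would need $u_i>0$ and $v_i>0$ (possible) or $u_j>0$ and $v_j>0$; but one checks directly that $\mathbf e_i\in\pi^{-1}(\mathbf a_i+\mathbb N\mathcal A)$ and $\mathbf e_i<\mathbf u$, so $\mathbf u$ fails to be minimal unless $\mathbf u=\mathbf e_i$ — excluded by hypothesis. This actually shows more cleanly that a minimal $\mathbf u\neq\mathbf e_i$ cannot have $u_i>0$ at all (else $\mathbf e_i\le\mathbf u$ and $\mathbf e_i\in\pi^{-1}(\mathbf a_i+\mathbb N\mathcal A)$ forces $\mathbf u=\mathbf e_i$), so in particular $u_i=0$, which kills the subcase $j=i$. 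Thus the only surviving configuration is $j\neq i$, $u_i=0$, $u_j,v_j>0$, and then $\mathbf u'=\mathbf u-\mathbf e_j$ has $\pi(\mathbf u')=\mathbf a-\mathbf a_j=\pi(\mathbf v-\mathbf e_j-\mathbf e_i)+\mathbf a_i$ (using $v_i>0$, $v_j>0$, $i\neq j$, so $\mathbf v-\mathbf e_i-\mathbf e_j\in\mathbb N^n$), giving $\mathbf u'\in\pi^{-1}(\mathbf a_i+\mathbb N\mathcal A)$, $\mathbf u'<\mathbf u$, $\mathbf u'\neq\mathbf e_i$ (as $u_i=0$), contradicting minimality.

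The main obstacle is the careful bookkeeping of the boundary cases — making sure that in every instance where $\mathbf u$ and $\mathbf v$ share a variable I can subtract that variable from $\mathbf u$ and still land inside $\pi^{-1}(\mathbf a_i+\mathbb N\mathcal A)$, which requires $\mathbf v$ to also contain $\mathbf e_i$ "leftover" after removing the shared variable. The clean way to organize this, which I would adopt in the write-up, is to first prove the auxiliary fact that $u_i=0$ for any minimal $\mathbf u\neq\mathbf e_i$ in $\pi^{-1}(\mathbf a_i+\mathbb N\mathcal A)$; with that in hand, the shared index $j$ is automatically different from $i$, $\mathbf v-\mathbf e_i-\mathbf e_j\in\mathbb N^n$ is immediate, and the contradiction $\mathbf u-\mathbf e_j\in\pi^{-1}(\mathbf a_i+\mathbb N\mathcal A)$ with $\mathbf u-\mathbf e_j<\mathbf u$, $\mathbf u-\mathbf e_j\neq\mathbf e_i$ drops out with no further case analysis.
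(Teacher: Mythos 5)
Your proposal is correct and is essentially the paper's own argument: first observe that $u_i=0$ (else $\mathbf e_i<\mathbf u$ lies in $\pi^{-1}(\mathbf a_i+\mathbb N\mathcal A)$, contradicting minimality), then pick a shared index $j\neq i$ with $u_j,v_j>0$ and note $\pi(\mathbf u-\mathbf e_j)=\pi(\mathbf v-\mathbf e_j)\in\mathbf a_i+\mathbb N\mathcal A$, contradicting minimality again. Your worry about the boundary case $\mathbf u-\mathbf e_j=\mathbf e_i$ is superfluous (minimality is in the whole set $\pi^{-1}(\mathbf a_i+\mathbb N\mathcal A)$, so any strictly smaller element of that set, including $\mathbf e_i$, already gives the contradiction), but it does no harm.
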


\begin{proof}
Observe that if $\mathbf u=(u_1,\ldots,u_n)$, then $u_i=0$. Notice also that since $\mathbf a\in \mathbf a_i+\mathbb N\mathcal A$, there exists $\mathbf v=(v_1,\ldots,v_n)\in \mathsf Z(\mathbf a)$ such that $v_i\neq 0$. Assume that $\mathbf u\cdot \mathbf v\neq 0$. As $u_i=0$, this means that there exists $j\in\{1,\ldots,n\}\setminus\{i\}$ with $u_j\neq 0\neq v_j$. But then $\pi(\mathbf v)=\mathbf a_i+\mathbf a_j+\mathbf a'$ for some $\mathbf a'\in \mathbb N\mathcal A$, and consequently $\pi(\mathbf u-\mathbf e_j)=\pi(\mathbf v-\mathbf e_j)\in \mathbf a_i+\mathbb N\mathcal A$, contradicting the minimality of $\mathbf u$.  
\end{proof}

There is still another non-unique factorization invariant that apparently has nothing to do with distances, and measures how far an element is from being a prime. 

The \textbf{$\omega$-primality} of $\mathbf a$, $\omega(\mathbf a)$, is the least positive integer such that whenever $\mathbf s_1+\cdots+\mathbf s_k-\mathbf a\in \mathbb \mathcal A$ for some $\mathbf s_1,\ldots,\mathbf  s_k\in \mathbb N \mathcal{A}$, then $\mathbf s_{i_1}+\cdots+\mathbf s_{i_{\omega(\mathbf a)}}-\mathbf a\in \mathbb \mathcal A$ for some  $\{i_1,\ldots,i_{\omega(\mathbf a)}\}\subseteq \{1,\ldots, k\}$.  We can restrict the search to sums of the form $\mathbf s_1+\cdots+\mathbf s_k$, with $\mathbf s_1,\ldots,\mathbf s_k\in \{\mathbf a_1,\ldots,\mathbf a_n\}$ (see \cite[Lemma 3.2]{omega}). In particular, $\omega(\mathbf a)=1$ means that $\mathbf a$ is prime.

Given $\mathbf a\in \mathbb N \mathcal{A}$, $\omega(\mathbf a)$ can be computed in the following form (\cite[Proposition 3.3]{omega})
\begin{equation}\label{calculo-omega}
\omega(\mathbf a)=\sup\big\{ |\mathbf u| : \mathbf u\ \text{minimal in}\ \pi^{-1}(\mathbf a +\mathbb N \mathcal{A})  \big\}. 
\end{equation}
In our setting, thanks to Dickson's lemma, this supremum turns out to be a maximum.

The $\omega$-primality of $\mathbb N \mathcal{A}$ is defined as $\omega(\mathbb N \mathcal{A})=\max_{i\in \{1,\ldots,n\}}\{\omega(\mathbf a_i)\}$. In the half-factorial case, both tame degree and $\omega$-primality coincide. 

\begin{proposition}
Assume that $\mathbb N \mathcal{A}$ is half-factorial. Then  \[\omega(\mathbb N \mathcal{A})=\mathsf t(\mathbb N \mathcal{A}).\]
\end{proposition}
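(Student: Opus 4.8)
The plan is to prove the two inequalities $\omega(\mathbb N\mathcal A)\le\mathsf t(\mathbb N\mathcal A)$ and $\mathsf t(\mathbb N\mathcal A)\le\omega(\mathbb N\mathcal A)$ separately. First I would fix $\omega\in\mathbb Q^d$ with $A\,\omega^T=(1,\ldots,1)^T$, so that by Lemmas \ref{longitudes-half-factorial} and \ref{distancias-half-factorial} any $\mathbf u,\mathbf v\in\mathsf Z(\mathbf a)$ satisfy $|\mathbf u|=|\mathbf v|=\mathbf a\cdot\omega$ and $\mathrm d(\mathbf u,\mathbf v)=\mathbf a\cdot\omega-\deg(\gcd(X^\mathbf u,X^\mathbf v))$. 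The two tools I expect to carry the argument are formula \eqref{calculo-omega}, which presents $\omega(\mathbf a_i)$ as the largest length of a minimal element of $\pi^{-1}(\mathbf a_i+\mathbb N\mathcal A)$, and the support-disjointness of Lemma \ref{minimales-soportes}. I would assume $\omega(\mathbb N\mathcal A)\ge 2$ throughout; the alternative $\omega(\mathbb N\mathcal A)=1$ means every atom is prime and $\mathbb N\mathcal A$ is factorial, a degenerate situation to be kept in mind.

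For $\omega(\mathbb N\mathcal A)\le\mathsf t(\mathbb N\mathcal A)$ I would pick $i$ with $\omega(\mathbf a_i)=\omega(\mathbb N\mathcal A)$ and, using \eqref{calculo-omega}, a minimal element $\mathbf u$ of $\pi^{-1}(\mathbf a_i+\mathbb N\mathcal A)$ with $|\mathbf u|=\omega(\mathbf a_i)\ge 2$; then $\mathbf u\ne\mathbf e_i$, and for $\mathbf a=\pi(\mathbf u)$ we have $u_i=0$ and $\mathbf a-\mathbf a_i\in\mathbb N\mathcal A$, so $\mathsf Z(\mathbf a)$ contains factorizations $\mathbf u'$ with $u'_i\ne 0$. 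By Lemma \ref{minimales-soportes} \emph{every} such $\mathbf u'$ has $\mathbf u\cdot\mathbf u'=0$, hence $\gcd(X^\mathbf u,X^{\mathbf u'})=1$ and $\mathrm d(\mathbf u,\mathbf u')=\mathbf a\cdot\omega=|\mathbf u|$. Thus no factorization of $\mathbf a$ involving $\mathbf a_i$ is closer than $|\mathbf u|$ to $\mathbf u$, which forces $\mathsf t(\mathbf a)\ge|\mathbf u|=\omega(\mathbb N\mathcal A)$.

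For $\mathsf t(\mathbb N\mathcal A)\le\omega(\mathbb N\mathcal A)$ I would fix $\mathbf a\in\mathbb N\mathcal A$, $\mathbf u\in\mathsf Z(\mathbf a)$ and $i$ with $\mathbf a-\mathbf a_i\in\mathbb N\mathcal A$. If $u_i\ne 0$, take $\mathbf u'=\mathbf u$. If $u_i=0$, then $\mathbf u\in\pi^{-1}(\mathbf a_i+\mathbb N\mathcal A)\setminus\{\mathbf e_i\}$; choose $\mathbf u_0\le\mathbf u$ minimal in $\pi^{-1}(\mathbf a_i+\mathbb N\mathcal A)$ and write $\mathbf u=\mathbf u_0+\mathbf c$ with $\mathbf c\in\mathbb N^n$, noting $(\mathbf u_0)_i=c_i=0$, $\mathbf u_0\ne\mathbf e_i$, and $|\mathbf u_0|\le\omega(\mathbf a_i)$. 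Putting $\mathbf b=\pi(\mathbf u_0)$, which satisfies $\mathbf b-\mathbf a_i\in\mathbb N\mathcal A$, I would pick $\mathbf w\in\mathsf Z(\mathbf b)$ with $w_i\ne 0$ and set $\mathbf u'=\mathbf w+\mathbf c\in\mathsf Z(\mathbf a)$; then $u'_i=w_i\ne 0$. Lemma \ref{minimales-soportes} applied to $\mathbf u_0$ yields $\mathbf u_0\cdot\mathbf w=0$, so $\gcd(X^\mathbf u,X^{\mathbf u'})=X^\mathbf c$ and therefore $\mathrm d(\mathbf u,\mathbf u')=\mathbf a\cdot\omega-|\mathbf c|=|\mathbf u|-|\mathbf c|=|\mathbf u_0|\le\omega(\mathbf a_i)\le\omega(\mathbb N\mathcal A)$. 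Hence $\mathsf t(\mathbf a)\le\omega(\mathbb N\mathcal A)$ for all $\mathbf a$, and so $\mathsf t(\mathbb N\mathcal A)\le\omega(\mathbb N\mathcal A)$.

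I expect the second inequality to be the crux, and also the only place where half-factoriality is essential: the key point is that Lemma \ref{minimales-soportes} forces $\gcd(X^\mathbf u,X^{\mathbf u'})$ to be \emph{exactly} the common tail $X^\mathbf c$ coming from the non-minimal part of $\mathbf u$, so that the half-factorial distance formula collapses to the controllable quantity $|\mathbf u_0|\le\omega(\mathbf a_i)$. Without half-factoriality the lengths of $\mathbf u$ and $\mathbf u'$ need not coincide and this exact cancellation breaks down — the same phenomenon underlying the failure of Theorem \ref{realization-betti-cat} illustrated in Example \ref{ej-nabla}.
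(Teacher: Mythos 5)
Your proposal is correct, and it takes a genuinely different route from the paper. The paper only proves $\mathsf t(\mathbb N\mathcal A)\le\omega(\mathbb N\mathcal A)$, importing the reverse inequality from \cite[Theorem 3.6]{local-tame}; and for the hard direction it picks $\mathbf a$ minimal with $\mathsf t(\mathbf a)=\mathsf t(\mathbb N\mathcal A)$ and invokes \cite[Lemma 5.4]{omega} to produce a pair $\mathbf u,\mathbf v$ realizing $\mathsf t(\mathbf a)$ with $\mathbf u$ already minimal in $\pi^{-1}(\mathbf a_i+\mathbb N\mathcal A)$, after which Lemma \ref{minimales-soportes} and half-factoriality give $\mathsf t(\mathbf a)=|\mathbf u|\le\omega(\mathbf a_i)$. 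You instead argue pointwise and self-containedly: for $\omega\le\mathsf t$ you exhibit an explicit witness (the minimal element of $\pi^{-1}(\mathbf a_i+\mathbb N\mathcal A)$ of maximal length, every competing factorization being support-disjoint from it by Lemma \ref{minimales-soportes}), and for $\mathsf t\le\omega$ you split an arbitrary $\mathbf u$ with $u_i=0$ as $\mathbf u=\mathbf u_0+\mathbf c$ with $\mathbf u_0$ minimal, transplant the tail $\mathbf c$ onto a factorization $\mathbf w$ of $\pi(\mathbf u_0)$ using $\mathbf a_i$, and use $\gcd(X^{\mathbf u},X^{\mathbf w+\mathbf c})=X^{\mathbf c}$ together with the half-factorial distance formula to get $\mathrm d(\mathbf u,\mathbf u')=|\mathbf u_0|\le\omega(\mathbf a_i)$. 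I checked the details ($(\mathbf u_0)_i=c_i=0$, existence of $\mathbf w$ with $w_i\neq 0$, the exact computation of the gcd) and they all go through; the decomposition $\mathbf u=\mathbf u_0+\mathbf c$ is essentially what \cite[Lemma 5.4]{omega} encapsulates, so your version buys independence from that reference (and from \cite{local-tame}) at the cost of a longer argument, and it yields the slightly stronger pointwise statement $\mathsf t(\mathbf a)\le\omega(\mathbb N\mathcal A)$ for every $\mathbf a$. The only loose end is the degenerate factorial case you flag ($\omega(\mathbb N\mathcal A)=1$ while $\mathsf t(\mathbb N\mathcal A)=0$ under the paper's conventions); this boundary convention issue is present in the paper's own proof as well, so it is not a gap relative to it, but if you want a complete write-up you should either dispose of that case explicitly or note the convention being used.
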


\begin{proof}
It is well known that $\omega(\mathbb N \mathcal{A})\le \mathsf t(\mathbb N \mathcal{A})$ (\cite[Theorem 3.6]{local-tame}). So we only have to prove the other inequality. Let $\mathbf a\in \mathbb N \mathcal{A}$ be minimal with respect to $\le_{\mathbb N \mathcal{A}}$ fulfilling that $\mathsf t(\mathbf a)=\mathsf t(\mathbb N \mathcal{A})$. Then according to \cite[Lemma 5.4]{omega}, there exists $\mathbf u,\mathbf v\in \mathsf Z(s)$, such that $\mathsf t(\mathbf a)=\mathrm d(\mathbf u,\mathbf v)$ with    $\mathbf u$ minimal (with respect to $\le$) in $\pi^{-1}(\mathbf a_i +\mathbb N \mathcal{A})$, $\mathbf u\cdot \mathbf e_i=0$ and $\mathbf v\cdot \mathbf e_i\neq 0$. In light of Lemma \ref{minimales-soportes},  $\mathbf u\cdot \mathbf v=0$, whence $\mathrm d(\mathbf u ,\mathbf v)=\max\{|\mathbf u|,|\mathbf v|\}$. As $\mathbb N \mathcal{A}$ is half factorial, we obtain $\max\{|\mathbf u|,|\mathbf v|\}=|\mathbf u|=|\mathbf v|$. Hence $\mathsf t(\mathbf a)=|\mathbf u|$. From (\ref{calculo-omega}), we conclude that $|\mathbf u|\le \omega(\mathbf a_i)\le \omega(\mathbb N \mathcal{A})$.
\end{proof}

In a private communication, A. Geroldinger told us that this last result can be also derived from the results appearing in \cite[Section 3]{local-tame}.

\begin{example}
 It is well known that $\mathsf c(\mathbb N \mathcal A)\le \mathsf \omega(\mathbb N \mathcal A)$ (see \cite[Section 3]{GK}). In the half-factorial case, this inequality might be strict. For instance, if we take $\mathcal A=\{(1,0),(1,3),(1,5),(1,7)\}$, then $\mathsf c(\mathbb N \mathcal A)=4 < 7=\mathsf \omega(\mathbb N \mathcal A)$. 
\end{example}



\begin{thebibliography}{99}

\bibitem{omega} \textsc{Blanco, V.; Garc\'{\i}a-S\'anchez, {P.A.}; Geroldinger, A.} \emph{Semigroup theoretical characterizations of arithmetical invariants with applications to numerical monoids and Krull monoids}. Illinois J. Math., to appear.

\bibitem{Charalambous} \textsc{Charalambous, H.; Thoma, A.} \emph{On simple A-multigraded minimal resolutions}. Combinatorial aspects of commutative algebra, 33--44, Contemp. Math., \textbf{502}, Amer. Math. Soc., Providence, RI, 2009.

\bibitem{same-length} \textsc{Chapman, S. T.; Garc\'{\i}a-S\'anchez, P. A.; Llena, D.; Marshall, J.}, \emph{Elements in a numerical semigroup with factorizations with the same length}, Canadian Math. Bull. \textbf{54} (2011), 39-43.

\bibitem{Chapman} \textsc{Chapman, S. T.; Garc\'{\i}a-S\'anchez, P. A.; Llena, D.; Ponomarenko, V.; Rosales, J. C.} \emph{The catenary and tame degree in finitely generated commutative cancellative monoids}. Manuscripta Math. \textbf{120} (2006), no. 3, 253-264.

\bibitem{numericalsgps}  \textsc{Delgado, M.; Garc{\'i}a-S{\'a}nchez, {P.A.};Morais, J.}
  \emph{``numericalsgps'': a {\sf {g}{a}{p}} package on numerical semigroups},\\
  \url{http://www.gap-system.org/Packages/numericalsgps.html}.

\bibitem{Eliahou} \textsc{Eliahou, S.}, \emph{Courbes monomiales et alg\`ebre de Rees symboliquem}, Ph.D. Thesis, Universit\'e of Gen\'eve, 1983 (in French). 

\bibitem{uniquely}  \textsc{Garc{\'i}a-S{\'a}nchez, {P.A.}; Ojeda, I.}  \emph{Uniquely presented finitely generated commutative monoids}, Pacific J. Math. \textbf{248} (2010), 91--105.

\bibitem{local-tame} \textsc{Geroldinger, A.; Hassler, W.} \emph{Local tameness of v-noetherian monoids}. J. Pure Appl. Algebra \textbf{212} (2008), 1509-1524.

\bibitem{GK} \textsc{Geroldinger, A.; Kainrath, F.} \emph{On the arithmetic of tame monoids with
  applications to {K}rull monoids and {M}ori domains}, J. Pure Appl. Algebra
  \textbf{214} (2010), 2199 -- 2218.

\bibitem{GHKb} 
\textsc{Geroldinger, A.; {Halter-Koch}, F.} \emph{Non-{U}nique {F}actorizations. {A}lgebraic, {C}ombinatorial and {A}nalytic {T}heory}, Pure and Applied  Mathematics, vol. 278, Chapman \& Hall/CRC, 2006.

\bibitem{herzog} \textsc{Herzog, H.} \emph{Generators and relations of abelian semigroups and semigroup rings.} Manuscripta Math., {\bf 3} (1970), 175-193.

\bibitem{OjVi1} \textsc{Ojeda, I.; Vigneron-Tenorio, A.} \emph{Simplicial complexes and minimal free resolution of monomial algebras}. J. Pure Appl. Algebra \textbf{214} (2010), no. 6, 850--861.

\bibitem{char-cmon-2} \textsc{Philipp, A.} \emph{A characterization of arithmetical invariants by the monoid of relations}, Semigroup Forum {\bf 81} (2010), 424-434.

\bibitem{libro-fg} \textsc{Rosales, J. C.; Garc\'{\i}a-S\'anchez, P. A.},  \emph{Finitely generated commutative monoids}, Nova Science Publishers, Inc., New York, 1999.

\bibitem{Sturmfels} \textsc{Sturmfels, B.}, \emph{Gr\"obner bases and convex polytopes}, volume 8 of University Lecture Series, American Mathematical Society, Providence, RI, 1996.

\end{thebibliography}
\end{document}